\pgfplotsset{compat=newest}
\definecolor{ao(english)}{rgb}{0.0, 0.5, 0.0}
\newtheorem{thm}{Theorem}
\crefname{thm}{Theorem}{Theorems}
\newtheorem{prop}{Proposition}
\crefname{prop}{Proposition}{Propositions}
\newtheorem{lem}{Lemma}
\crefname{lem}{Lemma}{Lemmas}
\newtheorem{cor}{Corollary}
\crefname{cor}{Corollary}{Corollaries}
\crefname{rem}{Remark}{Remark}
\crefname{ass}{Assumption}{Assumption}
\crefname{conj}{Conjecture}{Conjectures}
\crefname{defn}{Definition}{Definitions}
\newtheorem{prob}{Problem}
\crefname{prob}{Problem}{Problems}
\crefname{appl}{Application}{Applications}
\crefname{algorithm}{Algorithm}{Algorithms}
\crefname{paper}{Paper}{Papers}
\crefname{figure}{Figure}{Figures}
\crefname{section}{Section}{Sections}
\Crefname{section}{Section}{Sections}
\let\mathbb=\mathds
\newcommand{\Rmn}{\mathbb{R}^{n \times m}}
\newcommand{\Rnn}{\mathbb{R}^{n \times n}}
\newcommand{\conv}{\textnormal{conv}}
\newcommand{\rk}{\textnormal{rank}}
\newcommand{\svd}{\textnormal{svd}}
\newcommand{\diag}{\textnormal{diag}}
\newcommand{\argmin}{\operatornamewithlimits{argmin}}
\newcommand{\trace}{\textnormal{trace}}
\newcommand{\minmn}{\min \{ m,n \}}
\newcommand{\opts}{\star}
\newcommand{\normrast}[1]{\| #1 \|_{r\ast}}
\newcommand{\normr}[1]{ \| #1 \|_{r}}
\newcommand{\transp}{\mathsf{T}}
\newcommand{\funcdom}{\Rmn \to \mathbb{R} \cup \lbrace \infty \rbrace}
\definecolor{ao(english)}{rgb}{0.0, 0.5, 0.0}
\colorlet{FigColor1}{red}
\colorlet{FigColor1b}{orange}
\colorlet{FigColor2}{blue}
\colorlet{FigColor2b}{magenta}
\colorlet{FigColor4}{ao(english)}
\colorlet{FigColor3}{gray}
\colorlet{FigColor3b}{black}
\pgfplotsset{every axis plot/.append style={line width=1.4pt}}
\definecolor{bluebell}{rgb}{0.74, 0.83, 0.9}
\definecolor{airforceblue}{rgb}{0.36, 0.54, 0.66}
\begin{document}
%
\title{Low-Rank Optimization with Convex Constraints}
%
%
%

\author{Christian~Grussler,~\IEEEmembership{}
        Anders~Rantzer,~\IEEEmembership{}
        and~Pontus Giselsson.~\IEEEmembership{}
}

\maketitle

\begin{abstract}
The problem of low-rank approximation with convex constraints, which appears in data analysis, system identification, model order reduction, low-order controller design and low-complexity modelling is considered. Given a matrix, the objective is to find a low-rank approximation that meets rank and convex constraints, while minimizing the distance to the matrix in the squared Frobenius~norm. 
In many situations, this non-convex problem is convexified by nuclear norm regularization. 
However, we will see that the approximations obtained by this method may be far from optimal. In this paper, we propose an alternative convex relaxation that uses the convex envelope of the squared Frobenius~norm and the rank constraint. With this approach, easily verifiable conditions are obtained under which the solutions to the convex relaxation and the original non-convex problem coincide. 
An SDP representation of the convex envelope is derived, which allows us to apply this approach to several known problems. Our example on optimal low-rank Hankel approximation/model reduction illustrates that the proposed convex relaxation performs consistently better than nuclear~norm regularization and may outperform balanced truncation.
\end{abstract}

\begin{IEEEkeywords}
	Low-rank Approximation, Model Reduction, System Identification, 
	$k$-support~norm, Compressed Sensing.
\end{IEEEkeywords}

%
\IEEEpeerreviewmaketitle

\section{Introduction}
Optimization problems with a low-rank (sparsity) constraint have received considerable attention in data driven areas such as image analysis, multivariate linear regression and matrix completion (see,~e.g.~\cite{hastie2015statistical,recht2010guaranteed,velu2013multivariate,vidal2016generalized,elden2007matrix}), as well as many control subjects such as model order reduction, low order/sparse controller design, low complexity modelling, system identification, etc.~\cite{antoulas1997approximation,fazel2001rank,ankelhed2011design,zoltowski2014sparsity,zare2014low,grussler2016covariance,miller2012identification,hjalmarsson2012ident,liu2013nuclear,liu2010interior,zorzi2015factor,zorzi2016identification}.
This is because low-rank approximations allow us to study high dimensional (complex) problems in lower dimensional (simpler) domains. For example, the low-rank approximation of a Hankel operator or matrix requires a smaller number of equations to describe a dynamical system or controller, see,~e.g.~ \cite{glover1984all,ankelhed2011design,antoulas1997approximation,kung1987identification}. 

For unitarily invariant~norms an optimal low-rank approximation can be found by performing a singular value decomposition (SVD). Unfortunately, these approximations usually do not fulfil desired structural constraints such as element-wise nonnegativity, Hankel structure or prescribed entries \cite{chu2003structured,recht2010guaranteed,velu2013multivariate,miller2012identification}. Only in a few cases, an explicit solution to the constrained low-rank approximation problem is known \cite{antoulas2005approximation,velu2013multivariate,glover1984all}. For this reason, other concepts based on convex optimization have been developed~\cite{fazel2001rank,larsson2016convex,recht2010guaranteed,chandrasekaran2012convex,bach2012optimization}. Many of them rely on nuclear norm regularization, which for particular constraints and assumptions can guarantee a minimum rank solution~\cite{candes2009exact,recht2010guaranteed}. As a result, this technique (see~\cite{hjalmarsson2012ident,zare2016color,liu2013nuclear,fazel2001rank}) and its extensions (see~\cite{zorzi2015factor,zorzi2016identification}) has become a standard tool within control. Nevertheless, it is demonstrated here that nuclear regularization may be far from obtaining the optimal solution to the underlying non-convex problem. 

In this work, we study the optimal Frobenius norm low-rank approximation problem with a prescribed target rank and convex constraints (see~\cref{prob:nonneg_opt_low}). We provide an expression for the convex envelope (or equivalently the bi-conjugate) of 
\begin{align*}
f(M) = \|N-M\|_F^2+\chi_{\rk(M)\leq r}(M),
\end{align*}
where $N$ is a known data matrix and $\chi_{\rk(M)\leq r}(M)$ is the indicator function that allows for matrices of rank at most $r$. This is used to extend our work in~\cite{grussler2015optimal} to a more general setting and to provide further analysis. 

One formulation of this convex envelope has recently been presented in \cite{larsson2016convex}. In this work, we show how the bi-conjugate can be expressed very neatly in terms of the dual norm of the $r$-norm (the $\ell_2$ norm of the $r$ largest singular values). This dual norm is referred to as the $r\ast$ norm. A convex relaxation to problems involving $f$ with an additional constraint then naturally arises from the convex envelope of $f$. We provide guarantees and an example for when a globally optimal solution to our non-convex problem involving $f$ can be found by the proposed convex relaxation. 
We also show how to construct $r\ast$ norms for non-integer valued $r$. This gives rise to other convex relaxations in which the $r$ can be used as a regularization parameter to trade-off rank and data misfit in the solution. 

Further, an SDP-representation of the convex envelope is presented, which allows us to compute solutions to problems with SDP-representable constraints. This is particularly useful if the problem is of medium size (see~e.g.~\cite{ankelhed2011design}), but where it may be tedious to handle a large number of constraints with first order methods~\cite{combettes2011proximal}. Nevertheless, there are several important cases, e.g. Hankel structure, where first order methods can be used to solve problems of large size (see~\cite{grussler2017PhD,grussler2016lowrank,grussler2017local} and \cite{grussler2018github} for available implementations). 

The paper is organized as follows. In \Cref{sec:low-rank}, we introduce some definitions, recap the unconstrained low-rank approximation problem and define our main problem. Our main approach is derived and discussed in \Cref{sec:main}. Extensions of our approach to non-integer valued $r$ are discussed in \Cref{subsec:reg-interp} and corresponding SDP-representations are derived in~\Cref{sec:comp}.
In~Section \ref{sec:hankel}, an application to the open problem of Hankel structure optimal low-rank approximation \cite{blondel2012open} is presented. These approximations are used to construct reduced order models and to compare their performance with balanced truncation~\cite{antoulas2005approximation}. Finally, we draw conclusions and discuss future research in~\Cref{sec:conclusion}.
\section{Background}
\label{sec:low-rank}
\subsection{Notations}
The following notations for real matrices $X=(x_{ij}) \in \Rmn$ is used throughout this paper. Without loss of generality, it is assumed that $n \leq m$. Submatrices of $X$ are denoted by $$X_{(p:q,s:t)} := (x_{ij})_{\substack{p \leq i \leq q,\ s \leq j \leq t}} \in \mathbb{R}^{p-q+1 \times s-t+1}.$$ If $X = X^{\transp}$ is positive definite (semi-definite) we use the notation $X \succ 0$ ($X \succeq 0$). We also use these notations to describe the relation between two matrices, e.g. $A \succeq B$ means $A-B \succeq 0$.

The non-increasingly ordered singular values of $X \in \Rmn$ are denoted by $\sigma_1(X) \geq \dots \geq \sigma_{n}(X),$ counted with multiplicity.  The Frobenius inner-product for $X,Y \in \Rmn$ is defined as $$\langle X , Y \rangle := \sum_{i=1}^{m} \sum_{j=1}^{n} x_{ij} y_{ij} = \trace(X^{\transp}Y).$$ Correspondingly, the Frobenius~norm is given by $$\|X\|_F := \sqrt{\sum_{i=1}^{n}\sum_{j=1}^{n} x_{ij}^2} =  \sqrt{\sum_{i=1}^{m} \sigma_i^2(X)}.$$ 
The Frobenius~norm is so-called unitarily invariant, i.e. $\|UXV\|_F = \|X\|_F$ for all unitary matrices $U$ and $V$. The \emph{pseudo-inverse} of $X$ is denoted by $X^\dagger$ (see~e.g.~\cite{horn2012matrix}).

For a function $f: \Rmn \to \mathbb{R} \cup \lbrace \infty \rbrace$ that is linearly minorized, i.e. there exists $X \in \Rmn$ with $f(M) \geq \langle M,X \rangle$ for all $M\in \Rmn$, the \emph{conjugate function} $f^\ast$ is defined as
\begin{align*}
f^\ast(D) := \sup_{M \in \Rmn}  [\langle D,M\rangle - f(M)]
\end{align*}
for all $D\in \Rmn$. The \emph{bi-conjugate function} of $f$ is given by $f^{\ast \ast} := ({f^\ast})^\ast$. It is well-known that $f^\ast$ and $f^{\ast \ast}$ are convex (see~\cite{hiriart2013convex}). Moreover, $f(M) \geq f^{\ast \ast}(M)$ for all $M \in \Rmn$. In fact, $f^{\ast \ast}$ is the largest convex minorizer of $f$ (see~\cite[Theorem~X.1.3.5]{hiriart1996convex2}), because it is the point-wise supremum of all affine functions majorized by $f$ (see~\cref{fig:conj_biconj}). 
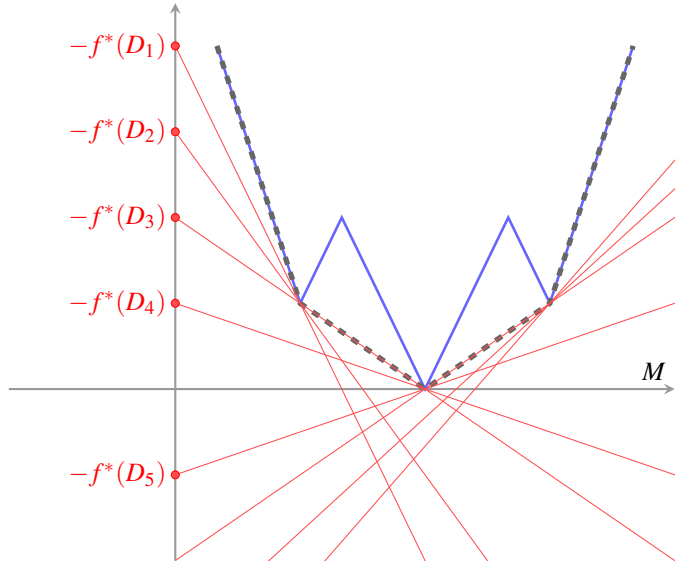
\begin{figure}[t]
	\centering
\begin{tikzpicture}
\begin{axis}[ticks=none,
xmin= -2.0, xmax=6,
ymin=-2.0, ymax=4.5,
xticklabel style = black!40,
xlabel = {$M$},
axis lines=center,
axis line style = thick,
axis line style = black!40,
legend style={at={(0.7,.95)},anchor=north east},
mark = none,
height = 9 cm]
\addplot[color=blue!60, 
line width = 1]
coordinates {
	(0.5,4)(1.5,1)(2,2)(3,0)(4,2)(4.5,1)(5.5,4)
}; \label{biconj:f}

\addplot[color=black!60, dashed, line width = 2]
coordinates {
	(0.5,4)(1.5,1)(3,0)(4.5,1)(5.5,4)
}; \label{biconj:biconj}

\addplot[red!70, domain=0:5, samples = 100,thin] (1.5*x,4-3*x); \label{biconj:sub}
\draw[color=red, fill = red!70] (axis cs:0,4) circle (1.5 pt) node[left]{$-f^\ast(D_1)$};

\addplot[red!70, domain=0:5, samples = 100,thin] (1.5*x,3-2*x);
\draw[color=red, fill = red!70] (axis cs:0,3) circle (1.5 pt) node[left]{$-f^\ast(D_2)$};

\addplot[red!70, domain=0:5, samples = 100,thin] (1.5*x,2-1*x);
\draw[color=red, fill = red!70] (axis cs:0,2) circle (1.5 pt) node[left]{$-f^\ast(D_3)$};

\addplot[red!70, domain=0:5, samples = 100,thin] (3*x,1-x);
\draw[color=red, fill = red!70] (axis cs:0,1) circle (1.5 pt) node[left]{$-f^\ast(D_4)$};

\addplot[red!70, domain=0:5, samples = 100,thin] (3*x,-1+x);
\draw[color=red, fill = red!70] (axis cs:0,-1) circle (1.5 pt) node[left]{$-f^\ast(D_5)$};

\addplot[red!70, domain=0:5, samples = 100,thin] (3*x,-2+2*x);

\addplot[red!70, domain=0:5, samples = 100,thin] (4.5*x,-3+4*x);

\addplot[red!70, domain=0:5, samples = 100,thin] (4.5*x,-4+5*x);

\end{axis}

\end{tikzpicture}
	\caption{Schematic plot of \ref{biconj:f} $f(M)$, \ref{biconj:biconj} $f^{\ast\ast}(M)$ and \ref{biconj:sub} tangents through $-f^\ast(D_i)$.}\label{fig:conj_biconj}
\end{figure} 

Finally, if $S \subset \Rmn$ and $f: \Rmn \to \mathbb{R} \cup \lbrace \infty \rbrace$, then $\argmin_S f$ denotes the \emph{set of minimizers} of $f$ over S. We write $x^\opts = \argmin_S f$, if $\argmin_S f = \lbrace x^\opts \rbrace$ is a singleton. Further, we use $\conv(S)$ to denote the \emph{convex hull} of $S$.

\subsection{Problem}
Let us turn to the underlying problem of this work. We start with the traditional optimal low-rank approximation problem in $\Rmn$, which is formulated as follows. Given $N \in \Rmn$ and $r \in \{1,\dots,n\}$, find a solution $M^\opts \in \Rmn$ to
\begin{equation}
\label{prob:trad_opt_low}
\begin{aligned}
& {\textnormal{minimize}}
& & \frac{1}{2}\|N-M\|_F^2\\
& \textnormal{subject to}
& & \rk(M) \leq r 
\end{aligned}
\end{equation}
In case of the Hilbert-Schmidt~norm, the natural operator generalization of the Frobenius-norm, this problem has been solved by Schmidt (see~\cite{stewart1990matrix}). The result is stated next.
\begin{prop}
	Let $N \in \Rmn$ and $r \in \{1,\dots ,n\}$. Then,
	$$\min_{\stackrel{M \in \Rmn}{{\rk(M) \leq r}}} \|N - M\|_F = \|\diag(\sigma_{r+1}(N),\dots,\sigma_{n}(N) ) \|_F.$$ \label{prop:Schmidt-Mirsky}
\end{prop}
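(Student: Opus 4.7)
My plan is to bound the minimum from both above and below. For the upper bound I would exhibit a concrete rank-$r$ candidate built from the SVD; for the matching lower bound I would invoke Weyl's singular value inequality together with unitary invariance of $\|\cdot\|_F$.

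To achieve the upper bound, I would take an SVD $N = U \Sigma V^\transp$ with $U \in \mathbb{R}^{n \times n}$ and $V \in \mathbb{R}^{m \times m}$ unitary, and with $\Sigma \in \Rmn$ carrying $\sigma_1(N) \geq \dots \geq \sigma_n(N)$ along its leading diagonal. Setting $M^\opts := U \Sigma_r V^\transp$, where $\Sigma_r$ is obtained from $\Sigma$ by zeroing every diagonal entry from position $r+1$ onwards, gives $\rk(M^\opts)\leq r$, and unitary invariance yields
\begin{align*}
\|N-M^\opts\|_F^2 = \|\Sigma - \Sigma_r\|_F^2 = \sum_{i=r+1}^n \sigma_i^2(N),
\end{align*}
so the claimed value is attained.

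For the lower bound, I would fix an arbitrary $M\in\Rmn$ with $\rk(M)\leq r$, observe that $\sigma_{r+1}(M)=0$, and apply Weyl's singular value inequality
\begin{align*}
\sigma_{i+j-1}(A+B) \leq \sigma_i(A) + \sigma_j(B)
\end{align*}
(see, e.g.,~\cite{horn2012matrix}) to $A:=M$, $B:=N-M$, $i:=r+1$, $j:=k$. This produces the index-wise estimate $\sigma_{r+k}(N)\leq \sigma_k(N-M)$ for every $k\in\{1,\dots,n-r\}$, and squaring and summing yields
\begin{align*}
\|N-M\|_F^2 = \sum_{k=1}^n \sigma_k^2(N-M) \geq \sum_{k=1}^{n-r}\sigma_{r+k}^2(N) = \sum_{i=r+1}^n \sigma_i^2(N),
\end{align*}
matching the value attained by $M^\opts$.

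The step I expect to be the main obstacle is the lower bound: the feasible set $\{M\in\Rmn : \rk(M)\leq r\}$ is highly non-convex, so a direct variational argument via KKT-type conditions is not readily available. Weyl's inequality sidesteps this by relating the singular values of $N$ and $N-M$ index-by-index, exploiting the fact that adding a rank-$r$ matrix can shift singular-value indices by at most $r$; the square-summability of $\|\cdot\|_F$ then turns this index-wise comparison into the desired global bound.
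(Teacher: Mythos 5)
Your proof is correct. The paper itself does not prove this proposition at all---it states it as a classical result of Schmidt and points to Stewart's book, so there is no in-paper argument to compare against. Your two-sided argument is the standard self-contained proof of the Eckart--Young--Mirsky theorem for the Frobenius norm: the truncated SVD $M^\opts = U\Sigma_r V^\transp$ attains the value $\sum_{i=r+1}^n \sigma_i^2(N)$ by unitary invariance, and Weyl's inequality $\sigma_{i+j-1}(A+B)\leq \sigma_i(A)+\sigma_j(B)$ applied with $A=M$, $B=N-M$, $i=r+1$ gives $\sigma_{r+k}(N)\leq \sigma_{r+1}(M)+\sigma_k(N-M)=\sigma_k(N-M)$ for $k\leq n-r$, since $\rk(M)\leq r$ forces $\sigma_{r+1}(M)=0$; summing squares gives the matching lower bound. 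The index bookkeeping is right (the inequality is used only for $r+k\leq n$), and you correctly note that the minimum is attained, which justifies writing $\min$ rather than $\inf$. This is a complete proof, and arguably a useful addition given that the paper delegates it to a reference.
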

All solutions to \cref{prob:trad_opt_low} are given by 
\begin{align*}
\small
\svd_r(N) := \left\{\sum_{i=1}^{r} \sigma_i(N) u_iv_i^{\transp}: N = \sum_{i=1}^{n} \sigma_i(N) u_iv_i^{\transp} \text{ is SVD of } N \right\},
\end{align*}
\textnormal
and each element in $\svd_r(N)$ is refered to as a \emph{standard SVD-approximation} of $N$. If $\sigma_r(N) = \sigma_{r+1}(N)$, then $\svd_r(N)$ contains infinitely many such solutions, because $\{u_r,u_{r+1}\}$ and $\{v_r,v_{r+1}\}$ are not uniquely determined. Otherwise, $\svd_r(N)$ is a singleton, i.e., if $\sigma_r(N) \neq \sigma_{r+1}(N)$ or $\sigma_r(N) = 0$, and we simply write $\svd_r(N)$ for the unique solution to \cref{prob:trad_opt_low}. 

This work addresses the following extension of \cref{prob:trad_opt_low}.

\begin{prob}
	\label{prob:nonneg_opt_low}
	Given $N \in \Rmn$, find $M^\opts \in \Rmn$ with $\rk(M^\opts) \leq r$ such that 
	\begin{equation*}
	\min_{\stackrel{M \in \Rmn}{\rk (M) \leq r}}\left[ \frac{1}{2}\|N -M\|_F^2 + g(M)\right] = \frac{1}{2}\|N-M^\opts\|_F^2+g(M^\opts),
	\end{equation*}
	where $g: \Rmn \to \mathbb{R} \cup \lbrace \infty \rbrace$ is a given closed, proper and convex function, i.e., the epi-graph of $g$ is closed, non-empty and convex, respectively.
\end{prob}	
Compared to \cref{prob:trad_opt_low}, \cref{prob:nonneg_opt_low} has an additional function $g$ that can be used to add information about the desired solution. Both problems are non-convex due to the rank constraint. Nevertheless, we will see in~\Cref{sec:main} that they can often be solved by convex optimization and semi-definite programming.

In the following, we often use $g(M) \equiv \chi_{\mathcal{C}}(M)$, where 
\begin{align*}
\chi_{\mathcal{C}}(M) := \begin{cases}
0, & M \in \mathcal{C}\\
\infty, & M\notin \mathcal{C}
\end{cases}
\end{align*}
is defined to be the indicator function of a (convex) set $\mathcal{C} \subset \Rmn$. We also use $\chi_{\rk(M)\leq r}$ to denote the indicator function of the set of matrices with at most rank $r$. In the remainder of this paper, it is assumed that $g + \chi_{\rk(M)\leq r}$ is proper. 

\subsection{Nuclear Norm Regularization}
\label{subsec:nuc:_reg}

One of the most widely used methods to convexify rank constrained problems is to use nuclear~norm regularization. It borrows techniques from sparse regularized regression (see~\cite{hastie2015statistical}), where the $\ell_1$ norm is used as a sparsifier.

In our case, rather than having a sparse solution, we are interested in having a small number of non-zero singular values. The nuclear norm imposes an $\ell_1$ norm penalty on the singular values. Therefore, for given $N \in \Rmn$, a matrix version for convexifying \cref{prob:nonneg_opt_low} reads
\begin{equation}
\label{eq:nuc_heu}
\min_{M \in \Rmn} \frac{1}{2}\|N -M\|^2_F + \mu \|M\|_{1\ast} +g(M),
\end{equation}
where $g: \Rmn \to \mathbb{R} \cup \lbrace \infty \rbrace$ is a closed and proper convex function. The simplicity of this convexification, as well as the results in \cite{fazel2001rank,recht2010guaranteed}, stimulated a large growth in the application of this method. However, it is often challenging to choose $\mu$ a priori in order to obtain a solution of specific rank. Commonly one assumes that the rank as a function of $\mu$ looks like a staircase, i.e., a large/small $\mu$ decreases/increases the rank. 

In general, this heuristic does not return an optimal solution to \cref{prob:nonneg_opt_low}. In particular, in the case $g = 0$, one usually cannot choose $\mu$ such that the SVD-approximation is obtained. Finally, there is no certificate for checking whether a solution is a minimizer of \cref{prob:nonneg_opt_low}.

\section{The \texorpdfstring{$r\ast$}{r*} approach}
\label{sec:main}
In the following, we consider the problem of finding solutions to \cref{prob:nonneg_opt_low}.  
Our approach is based on convex relaxations of \cref{prob:nonneg_opt_low} by means of what we call the $r\ast$ norms. These norms are defined in the following lemma.
\begin{lem}\label{lem:norm}
	Let $M \in \Rmn$, and $r \in \{1,\dots, n\}$. Then,
	\begin{equation}
	\| M \|_r := \sqrt{\sum_{i=1}^r \sigma_i^2(M)} = \sup_{\stackrel{\|X\|_F = 1}{\rk(X) \leq r}} \langle M,X\rangle \label{eq:firstass}
	\end{equation} 
	is a unitarily invariant~norm with dual~norm
	\begin{align*} 
	\| M \|_{r\ast} &:= \max_{\|X \|_r \leq 1}  \langle M,X \rangle = 
	\max_{\sum_{i=1}^r s_i^2 \leq 1} \left[ \sum_{i=1}^{r} \sigma_i(M) s_i + s_r \sum_{i=r+1}^{n} \sigma_i(M) \right].
	\end{align*} 
	Moreover, 
	\begin{align}
	&\|M\|_1 \leq \dots \leq \|M\|_n = \|M \|_F = \|M\|_{n\ast} \leq \dots \leq \|M\|_{1 \ast}, \label{eq:norm_ineq}\\
	&\rk(M)  \leq r \text{ if and only if } \|M\|_r = \|M\|_F = \|M \|_{r\ast}. \label{eq:rank_norm}
	\end{align}
\end{lem}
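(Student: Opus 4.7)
The plan is to prove the statement in four stages, working outward from the supremum characterization of $\|\cdot\|_r$ to the more delicate dual norm formula, and finishing with the bookkeeping identities. The first stage would establish \cref{eq:firstass}: take an SVD $M = U\Sigma V^\transp$, and for any $X$ with $\rk(X) \leq r$ and $\|X\|_F \leq 1$, bound $\langle M,X\rangle$ by two successive inequalities -- von Neumann's trace inequality followed by Cauchy--Schwarz on the first $r$ singular values -- to get $\langle M,X\rangle \leq \sum_{i=1}^r \sigma_i(M)\sigma_i(X) \leq \|M\|_r \sqrt{\sum_{i=1}^r \sigma_i^2(X)} \leq \|M\|_r$. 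Then exhibit the rank-$r$ SVD truncation of $M$ (normalized to unit Frobenius norm) as an explicit maximizer. From the supremum form, positive definiteness, absolute homogeneity, and the triangle inequality come essentially for free, while unitary invariance follows from $\sigma_i(UMV) = \sigma_i(M)$.

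I would then turn to the dual norm. The approach is to use von Neumann's trace inequality a second time (this time with both $M$ and $X$ free) to reduce the computation of $\max_{\|X\|_r \leq 1} \langle M,X\rangle$ to a scalar optimization over non-increasing, nonnegative sequences $x_1 \geq \cdots \geq x_n \geq 0$ with $\sum_{i=1}^r x_i^2 \leq 1$, where $x_i$ plays the role of $\sigma_i(X)$. The key observation is that the variables $x_{r+1},\dots,x_n$ do not enter the $\|\cdot\|_r$-budget but are bounded above by $x_r$; since every $\sigma_i(M) \geq 0$, the optimum saturates $x_i = x_r$ for $i > r$. Writing $s_i := x_i$ for $i \leq r$ then delivers the stated closed-form maximization.

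The remaining identities are essentially bookkeeping. The chain \cref{eq:norm_ineq} follows from monotonicity of $\sqrt{\sum_{i=1}^r \sigma_i^2(M)}$ in $r$, the Frobenius/singular-value identity $\|M\|_n = \|M\|_F$, the self-duality of the Frobenius norm for $\|M\|_F = \|M\|_{n\ast}$, and the general principle that enlarging a primal norm shrinks its dual. For \cref{eq:rank_norm}, the direction $\rk(M) \leq r \Rightarrow \|M\|_r = \|M\|_F = \|M\|_{r\ast}$ is a direct substitution (all sums involving $\sigma_i$ for $i > r$ vanish), and the converse is read off from $\|M\|_r = \|M\|_F$, which forces $\sum_{i>r} \sigma_i^2(M) = 0$. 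The main obstacle I anticipate is the dual-norm step: justifying rigorously that the outer maximization reduces to the ordered scalar problem requires care with von Neumann's equality conditions (alignment of singular vectors of $M$ and $X$) and with the fact that the ordering $x_1 \geq \cdots \geq x_n$ couples the unconstrained tail to the constrained head through $x_r$, so that the saturation argument is the substantive content of the proof.
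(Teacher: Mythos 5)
Your proposal is correct and follows essentially the same route as the paper's proof: both rest on von Neumann's trace inequality (the paper cites it as \cite[Corollary~7.4.1.3]{horn2012matrix}) to obtain \cref{eq:firstass}, then reuse it to reduce the dual-norm maximization to an ordered scalar problem in which the tail variables saturate at $s_r$, and finish \cref{eq:norm_ineq,eq:rank_norm} by the same monotonicity and substitution arguments. The only cosmetic difference is that the paper obtains unitary invariance and the norm axioms by identifying $\|\cdot\|_r$ with a symmetric gauge function, whereas you read them off the supremum representation and the invariance of singular values.
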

A proof to this lemma is provided in~\Cref{subsec:proofs}. Notice that $\| M \|_1 = \sigma_1(M)$ is equal to the spectral~norm and its dual~norm $\| M \|_{1\ast} = \sum_{i=1}^{n}\sigma_i(M)$ is equal to the nuclear (trace~norm). These~norms can be formulated using convex linear matrix inequalities (see~\cite{fazel2001rank,recht2010guaranteed}). In~\Cref{sec:main} it is shown that the same holds true for $\| \cdot \|_r^2$ and $\| \cdot \|_{r\ast}^2$. 

Next we show that the $r\ast$ norm can be used to construct the largest convex minorizer (convex envelope) of
$$f(M) := \frac{1}{2}\|N-M\|^2_F + \chi_{\rk(M)  \leq r}(M).$$

\begin{thm}\label{thm:con_bicon}
	Let $N \in \Rmn$, and $r \in \{1,\dots, n\}$. Then the conjugate and bi-conjugate functions of $$f(M) := \frac{1}{2}\|N-M\|^2_F + \chi_{\rk(M)  \leq r}(M)$$ are given by
	\begin{align} 
	f^\ast(D) &= \frac{1}{2}\|N+D\|_r^2 - \frac{1}{2}\| N  \|_F^2, \label{eq:fconj}\\
	f^{\ast \ast} (M) &=  \frac{1}{2}\|M\|_{r\ast}^2 - \langle N,M \rangle + \frac{1}{2}\|N\|_F^2 \label{eq:fbiconj}
	\end{align}
	for all $D, M \in \Rmn$.
\end{thm}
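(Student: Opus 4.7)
The plan is to compute the two conjugates directly from the definitions, using Proposition~\ref{prop:Schmidt-Mirsky} for the first and the standard Fenchel duality between a norm squared and its dual norm squared for the second.

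For the conjugate \cref{eq:fconj}, I would first complete the square inside the supremum. Writing
\begin{align*}
\langle D,M\rangle - f(M) &= \langle D,M\rangle - \tfrac{1}{2}\|N-M\|_F^2 - \chi_{\rk(M)\le r}(M)\\
&= -\tfrac{1}{2}\|M-(N+D)\|_F^2 + \tfrac{1}{2}\|N+D\|_F^2 - \tfrac{1}{2}\|N\|_F^2 - \chi_{\rk(M)\le r}(M),
\end{align*}
the supremum reduces to the classical rank-constrained Frobenius approximation problem \cref{prob:trad_opt_low} with data matrix $N+D$. Applying Proposition~\ref{prop:Schmidt-Mirsky} gives
\begin{equation*}
\min_{\rk(M)\le r}\tfrac{1}{2}\|M-(N+D)\|_F^2 = \tfrac{1}{2}\sum_{i=r+1}^n\sigma_i^2(N+D),
\end{equation*}
and substituting this back, the terms $\sigma_1^2(N+D)+\cdots+\sigma_r^2(N+D)$ are precisely $\|N+D\|_r^2$, yielding \cref{eq:fconj}.

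For the bi-conjugate \cref{eq:fbiconj}, I would start from $f^{\ast\ast}(M)=\sup_{D}[\langle D,M\rangle - f^\ast(D)]$, substitute \cref{eq:fconj}, and change variables $E:=N+D$ to obtain
\begin{equation*}
f^{\ast\ast}(M) = -\langle N,M\rangle + \tfrac{1}{2}\|N\|_F^2 + \sup_{E\in\Rmn}\bigl[\langle E,M\rangle - \tfrac{1}{2}\|E\|_r^2\bigr].
\end{equation*}
The remaining supremum is the Fenchel conjugate of $\tfrac{1}{2}\|\cdot\|_r^2$ at $M$. I would then invoke the general fact that for any norm $\|\cdot\|$ on a finite-dimensional inner-product space, the conjugate of $\tfrac{1}{2}\|\cdot\|^2$ is $\tfrac{1}{2}\|\cdot\|_\ast^2$, where $\|\cdot\|_\ast$ is its dual. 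This is shown in one line by parametrising $E=tU$ with $\|U\|_r=1$ and optimising over $t$ (giving $t=\langle U,M\rangle$) and then over $U$ (giving $\|M\|_{r\ast}^2$). Combined with Lemma~\ref{lem:norm}, which establishes that $\|\cdot\|_r$ is indeed a norm with dual $\|\cdot\|_{r\ast}$, this produces \cref{eq:fbiconj}.

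The computation is essentially mechanical and the only nontrivial input is Proposition~\ref{prop:Schmidt-Mirsky}, which reduces the rank-constrained supremum to an explicit expression in the singular values. The main thing to be careful about is the change of variables $E=N+D$ and keeping track of the constant $\tfrac{1}{2}\|N\|_F^2$, since this is what ensures that the affine term $-\langle N,M\rangle$ and the constant in \cref{eq:fbiconj} appear with the correct signs. Once the squared-norm/dual-squared-norm duality is invoked, the identification of the dual of $\|\cdot\|_r$ as $\|\cdot\|_{r\ast}$ is immediate from Lemma~\ref{lem:norm}, so no further obstacle remains.
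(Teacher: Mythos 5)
Your proposal is correct and follows essentially the same route as the paper: complete the square so that \cref{prop:Schmidt-Mirsky} evaluates the rank-constrained supremum for $f^\ast$, then shift variables and use the conjugacy between $\tfrac{1}{2}\|\cdot\|_r^2$ and $\tfrac{1}{2}\|\cdot\|_{r\ast}^2$ for $f^{\ast\ast}$. The only cosmetic difference is that the paper cites \cite[Corollary~15.3.1]{rockafellar1970convex} for the squared-norm/dual-squared-norm duality where you supply the short direct argument.
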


A proof to \cref{thm:con_bicon} can be found in~\Cref{subsec:proof_con_bicon}. Note that by Fenchel duality (see~\cite[Section~31]{rockafellar1970convex}) the following Lemma holds.
\begin{lem}
	\label{lem:zero-dual}
	Let $f, g: \funcdom$ be such that $g$ is proper, closed and convex. Then,
	\begin{align}
	\inf_{M \in \Rmn} \left[ f(M) +g(M) \right] &\geq -\inf_{D \in \Rmn}\left[f^{\ast}(D)+g^{\ast}-(D)\right] \label{lem:fenchel_inequ_left}\\ 
	& = \inf_{M \in \Rmn}\left[f^{\ast \ast}(M)+g(M)\right]. \label{lem:fenchel_inequ}
	\end{align}	
	If $M^\opts$ is a solution to \cref{lem:fenchel_inequ} such that $f(M^\opts) = f^{\ast \ast}(M^\opts)$, then $M^\opts$ is also a solution to the left-hand side of \cref{lem:fenchel_inequ_left}.
\end{lem}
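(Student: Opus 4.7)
The plan is to combine three ingredients: (i) the Fenchel--Young inequality for weak duality, (ii) Rockafellar's strong Fenchel duality theorem applied to the convex pair $(f^{\ast\ast}, g)$, and (iii) the pointwise bound $f \geq f^{\ast\ast}$ for recovering a primal minimizer from a relaxed one.

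For the weak-duality step leading to \cref{lem:fenchel_inequ_left}, I would fix arbitrary $M, D \in \Rmn$ and apply Fenchel--Young twice: $f(M) \geq \langle D,M\rangle - f^{\ast}(D)$, and with $-D$ substituted for $D$, $g(M) \geq -\langle D,M\rangle - g^{\ast}(-D)$. Summing the two kills the $\langle D,M\rangle$ terms and yields $f(M) + g(M) \geq -f^{\ast}(D) - g^{\ast}(-D)$ for every $M, D$. Taking the infimum over $M$ on the left and the supremum over $D$ on the right (equivalently, negating and taking the infimum of $f^{\ast}(D) + g^{\ast}(-D)$) produces the claimed inequality.

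For the equality with $\inf_M[f^{\ast\ast}(M) + g(M)]$, I would note that $f^{\ast\ast}$ is closed and convex by definition and, by the assumed linear minorization of $f$, also proper; $g$ is closed, proper and convex by hypothesis. Using the identity $(f^{\ast\ast})^{\ast} = f^{\ast}$ (always valid), the weak-duality step applied to $f^{\ast\ast}$ in place of $f$ gives one inequality, and I would close the gap by invoking Rockafellar's Fenchel duality theorem \cite[Thm.~31.1]{rockafellar1970convex}. I expect this to be the main obstacle, as strong duality requires a qualification condition (for instance $\ri(\dom f^{\ast\ast}) \cap \ri(\dom g) \neq \emptyset$); in the settings considered in this paper this holds and can be stated explicitly, but the lemma as written is best read as an application of the duality theorem under standard Rockafellar-type hypotheses.

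The final recovery assertion is then essentially a sandwich argument. Given $M^{\opts}$ attaining the right-hand infimum with $f(M^{\opts}) = f^{\ast\ast}(M^{\opts})$, I would chain $f(M) + g(M) \geq f^{\ast\ast}(M) + g(M) \geq f^{\ast\ast}(M^{\opts}) + g(M^{\opts}) = f(M^{\opts}) + g(M^{\opts})$ for every $M$, using $f \geq f^{\ast\ast}$ pointwise and the optimality of $M^{\opts}$ in the relaxed problem. This shows $M^{\opts}$ minimizes $f + g$, and combining with \cref{lem:fenchel_inequ_left,lem:fenchel_inequ} forces all inequalities there to be equalities, so the primal and dual values coincide at $M^{\opts}$.
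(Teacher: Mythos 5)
Your argument is correct and is essentially the paper's own (the paper gives no written proof, simply invoking Fenchel duality via \cite[Section~31]{rockafellar1970convex}): Fenchel--Young summed over $f$ and $g$ gives the weak-duality inequality, Rockafellar's Theorem~31.1 applied to the convex pair $(f^{\ast\ast},g)$ together with $(f^{\ast\ast})^{\ast}=f^{\ast}$ gives the equality, and the sandwich $f\geq f^{\ast\ast}$ gives the recovery of a primal minimizer. Your caveat about the constraint qualification is well taken---the lemma tacitly assumes one---but in every use in the paper $f^{\ast\ast}(M)=\tfrac{1}{2}\|M\|_{r\ast}^{2}-\langle N,M\rangle+\tfrac{1}{2}\|N\|_{F}^{2}$ is finite on all of $\Rmn$, so $\ri(\dom f^{\ast\ast})\cap\ri(\dom g)=\ri(\dom g)\neq\emptyset$ holds automatically for proper $g$ and the equality (with dual attainment) follows.
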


Therefore, we can construct the dual and bi-dual problems to \cref{prob:nonneg_opt_low} as
\begin{align}
-&\min_{D \in
	\Rmn} \left[g^\ast(-D) + \frac{1}{2}\|N+D\|_r^2 - \frac{1}{2}\| N  \|_F^2 \right], \label{eq:dual_problem} \tag{A}\\
&\min_{M \in \Rmn} \left[ \frac{1}{2}\|M\|_{r\ast}^2 - \langle N,M \rangle + \frac{1}{2}\|N\|_F^2+g(M)\right],\tag{B} \label{eq:dualdaul}
\end{align}
which are accompanied by the next central result. 
\begin{prop}\label{prop:main}
	Let $N \in \Rmn$ and $g: \Rmn \to \mathbb{R} \cup \lbrace \infty \rbrace$ be a closed proper convex function. Then for all $r \in \{1,\dots, n \}$
	\begin{align}
	\min_{ \substack{M\in \Rmn \\\rk(M) \leq r}}\left[\frac{1}{2}\|N-M\|^2_F +g(M)\right] \notag
	& \; \geq \; -\min_{D \in
		\Rmn} \left[g^\ast(-D) + \frac{1}{2}\|N+D\|_r^2 - \frac{1}{2}\| N  \|_F^2 \right]	\label{eq:duality_gap} \tag{C}\\ 
	&= \; \min_{M \in \Rmn} \left[ \frac{1}{2}\|M\|_{r\ast}^2 - \langle N,M \rangle + \frac{1}{2}\|N\|_F^2+g(M)\right].\notag
	\end{align}	
	Assume that \cref{eq:dualdaul} has a minimizer $M^\opts$ with $\rk(M^\opts) \leq r$. Then,
	\begin{align*}
	\argmin_{\stackrel{M \in \Rmn}{\rk(M) \leq r}} \left[ \frac{1}{2}\|N-M\|_F^2 + g(M)\right]
	\; \subset \; \argmin_{{M \in \Rmn}} \left[ \frac{1}{2}\|M\|_{r\ast}^2 - \langle N,M \rangle + \frac{1}{2}\|N\|_F^2+g(M)\right].\label{prop:dualdual_primal_opt}
	\end{align*}
\end{prop}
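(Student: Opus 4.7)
The plan is to apply Lemma~\ref{lem:zero-dual} with $f(M) := \tfrac{1}{2}\|N-M\|_F^2 + \chi_{\rk(M)\leq r}(M)$ and the convex function $g$ given in the statement. Theorem~\ref{thm:con_bicon} already supplies closed-form expressions for both $f^\ast$ and $f^{\ast\ast}$, so substituting these into~\cref{lem:fenchel_inequ_left,lem:fenchel_inequ} reproduces the chain~\cref{eq:duality_gap} line by line. This establishes the first half of the claim with no additional computation and no need to revisit Fenchel duality from scratch.

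For the minimizer inclusion, suppose $M^\opts$ minimizes~\cref{eq:dualdaul} and satisfies $\rk(M^\opts)\leq r$. The rank characterization~\cref{eq:rank_norm} of Lemma~\ref{lem:norm} gives $\|M^\opts\|_{r\ast}=\|M^\opts\|_F$, whence
\[
f^{\ast\ast}(M^\opts) = \tfrac{1}{2}\|M^\opts\|_F^2 - \langle N,M^\opts\rangle + \tfrac{1}{2}\|N\|_F^2 = \tfrac{1}{2}\|N-M^\opts\|_F^2 = f(M^\opts).
\]
By the second part of Lemma~\ref{lem:zero-dual}, $M^\opts$ is then also a minimizer of the non-convex primal, so the primal and the bi-dual attain a common optimal value $v^\opts$.

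Now take any primal minimizer $\tilde M$. Since $\rk(\tilde M)\leq r$ we have $f(\tilde M)+g(\tilde M)=v^\opts$, and since $f^{\ast\ast}\leq f$ pointwise we get $f^{\ast\ast}(\tilde M)+g(\tilde M)\leq v^\opts$. The reverse inequality holds because $\tilde M$ is feasible for the unconstrained bi-dual whose minimum is $v^\opts$, so equality is forced and $\tilde M\in\argmin_{M}\bigl[f^{\ast\ast}(M)+g(M)\bigr]$, which is exactly the stated inclusion.

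The heavy lifting is already packaged into Theorem~\ref{thm:con_bicon} and~\cref{eq:rank_norm}, so no genuinely hard step remains. The only delicate point, and the pivot on which everything turns, is using~\cref{eq:rank_norm} to force $f$ and $f^{\ast\ast}$ to coincide on matrices of rank at most $r$; this is what unlocks Lemma~\ref{lem:zero-dual} and closes the zero-duality-gap argument in both directions.
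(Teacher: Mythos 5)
Your proposal is correct and follows exactly the route the paper intends: the chain \cref{eq:duality_gap} is Lemma~\ref{lem:zero-dual} instantiated with the conjugates from Theorem~\ref{thm:con_bicon}, and the argmin inclusion is obtained by using \cref{eq:rank_norm} to force $f(M^\opts)=f^{\ast\ast}(M^\opts)$, closing the duality gap, and then sandwiching any primal minimizer between $f^{\ast\ast}\leq f$ and the bi-dual optimal value. The paper leaves this second half implicit, and your write-up supplies it correctly.
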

Thus obtaining a $\rk$-$r$ solution to the convex relaxation problem \cref{eq:dualdaul} implies solving the original non-convex problem.  This is why we suggest to use \cref{eq:dualdaul} instead of the the nuclear~norm heuristic (see \cref{eq:nuc_heu} in \Cref{subsec:nuc:_reg}) as convex relaxation to \cref{prob:nonneg_opt_low}.
Nevertheless, in general there may be a duality-gap for some choices of $g$ (see~\cref{sec:comp}). This is reflected by the inequality in \cref{eq:duality_gap}. Fortunately, there are many situations with no duality-gap. Next, an important case is discussed to provide additional insights.

\begin{prop}
	\label{prop:D_uniqueness}
	Assume that $D^\opts$ is a solution to \cref{eq:dual_problem} and
	$\sigma_r(N+D^\opts) \neq \sigma_{r+1}(N+D^\opts)$ or $\sigma_r(N+D^\opts) = 0$. Then there is no duality gap in \cref{eq:duality_gap} and $\svd_r(N+D^\ast)$ is the unique minimizing argument of~\cref{prob:nonneg_opt_low}, i.e.
	\begin{equation*}
	\svd_r(N+D^\opts) = \argmin_{\stackrel{M \in \Rmn}{\rk(M) \leq r}} \left[ \frac{1}{2}\|N-M\|_F^2 + g(M)\right].
	\end{equation*}
\end{prop}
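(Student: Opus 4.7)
The plan is to exhibit Fenchel--Young equality for the pair $(M^\opts, D^\opts)$, where $M^\opts := \svd_r(N + D^\opts)$, which will simultaneously close the duality gap in \cref{eq:duality_gap} and force any primal minimizer to coincide with $M^\opts$. Setting $Y := N + D^\opts$, the identity
\begin{equation*}
\langle D^\opts, M\rangle - \tfrac{1}{2}\|N-M\|_F^2 = -\tfrac{1}{2}\|Y-M\|_F^2 + \tfrac{1}{2}\bigl(\|Y\|_F^2 - \|N\|_F^2\bigr)
\end{equation*}
combined with \cref{prop:Schmidt-Mirsky} shows that the supremum defining $f^\ast(D^\opts)$ over rank-$\leq r$ matrices is attained exactly on $\svd_r(Y)$. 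Under the hypothesis $\sigma_r(Y)\neq\sigma_{r+1}(Y)$ or $\sigma_r(Y)=0$ this set is the singleton $\{M^\opts\}$, so $\rk(M^\opts)\leq r$ and the Fenchel--Young equality $f(M^\opts) + f^\ast(D^\opts) = \langle M^\opts, D^\opts\rangle$ holds on the $f$-side.

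Next I would exploit the dual optimality of $D^\opts$. The first-order condition for \cref{eq:dual_problem} yields some $M'\in\partial f^\ast(D^\opts)\cap\partial g^\ast(-D^\opts)$. The crucial additional step is the identification $\partial f^\ast(D^\opts) = \{M^\opts\}$: starting from the variational representation
\begin{equation*}
\tfrac{1}{2}\|Y\|_r^2 = \sup_{\rk X \leq r}\bigl[\langle Y, X\rangle - \tfrac{1}{2}\|X\|_F^2\bigr],
\end{equation*}
obtained by optimizing first the singular values and then the singular vectors of $X$, the unique argmax under the separation assumption is $M^\opts$, and a standard Danskin-type argument then shows that $f^\ast$ is differentiable at $D^\opts$ with $\nabla f^\ast(D^\opts) = M^\opts$. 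Consequently $M' = M^\opts$, which gives $-D^\opts\in\partial g(M^\opts)$ and thus the second Fenchel--Young equality $g(M^\opts) + g^\ast(-D^\opts) = -\langle M^\opts, D^\opts\rangle$.

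Summing the two equalities and invoking \cref{lem:zero-dual} closes the duality gap in \cref{eq:duality_gap} and certifies $M^\opts$ as a minimizer of \cref{prob:nonneg_opt_low}. For uniqueness, any other primal minimizer $\bar M$ would have to satisfy the same Fenchel--Young chain with equality, hence attain the supremum defining $f^\ast(D^\opts)$; since that supremum set is $\{M^\opts\}$, necessarily $\bar M = M^\opts$. The main obstacle is the singleton identification of $\partial f^\ast(D^\opts)$: because $f$ itself is non-convex, the achievement set for Fenchel--Young of $f$ does not a priori coincide with the full convex subdifferential of $f^\ast$, so this step requires either Danskin's theorem applied to the variational representation above, or the spectral-function smoothness of $Y \mapsto \tfrac{1}{2}\|Y\|_r^2$ at points where the truncated SVD is unique.
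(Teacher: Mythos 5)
Your argument is sound in outline and is organized differently from the paper's. The paper does not verify the two Fenchel--Young equalities by hand: it proves the more general \cref{prop:DM_relation} by taking \emph{any} pair of solutions $(D^\opts,M^\opts)$ of \cref{eq:dual_problem,eq:dualdaul}, invoking \cite[Theorem~31.1]{rockafellar1970convex} to get $f^{\ast\ast}(M^\opts)=\langle D^\opts,M^\opts\rangle-f^\ast(D^\opts)$, hence $M^\opts\in\partial_D\tfrac12\|N+D\|_r^2|_{D=D^\opts}$, and then reading off this subdifferential explicitly from \cref{prop:sub_diff_rnorm} as $\conv(\svd_r(N+D^\opts))$; \cref{prop:D_uniqueness} then follows from \cref{prop:main}. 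You instead construct the primal candidate $M^\opts=\svd_r(N+D^\opts)$ directly, certify it by summing the two Fenchel--Young equalities against weak duality, and extract uniqueness from the fact that equality in the $f$-side Fenchel--Young inequality forces any minimizer to attain the supremum defining $f^\ast(D^\opts)$, which is a singleton by \cref{prop:Schmidt-Mirsky}. This buys you two things: you never need to assume or produce a minimizer of the bidual \cref{eq:dualdaul}, and the uniqueness argument is self-contained rather than routed through the inclusion of argmins in \cref{prop:main}. What it costs is exactly the step you flag: identifying $\partial f^\ast(D^\opts)$ as the singleton $\{M^\opts\}$ so that the stationarity condition for \cref{eq:dual_problem} delivers $M^\opts\in\partial g^\ast(-D^\opts)$. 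That step is not a gap in substance, because it is precisely what the paper's \cref{prop:sub_diff_rnorm} supplies: since $\tfrac12\|\cdot\|_r^2$ is convex with $\partial\tfrac12\|A\|_r^2=\|A\|_r\,\partial\|A\|_r=\conv(\svd_r(A))$, the separation hypothesis $\sigma_r\neq\sigma_{r+1}$ (or $\sigma_r=0$) makes this set a singleton and $f^\ast$ differentiable at $D^\opts$. Citing that proposition is cleaner than re-deriving the fact via Danskin.

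If you do insist on the Danskin route, note one technical caveat: the index set $\{X:\rk(X)\leq r\}$ in your variational representation is unbounded, and Danskin's theorem requires a compact index set. This is repairable---for $D$ in a neighbourhood of $D^\opts$ every maximizer satisfies $\|X\|_F\leq\|N+D\|_F$, so you may intersect with a fixed closed ball without changing the supremum locally---but it should be said. Also make explicit that splitting the optimality condition of \cref{eq:dual_problem} into $\partial f^\ast(D^\opts)\cap\partial g^\ast(-D^\opts)\neq\emptyset$ uses the Moreau--Rockafellar sum rule, which applies here because $f^\ast(D)=\tfrac12\|N+D\|_r^2-\tfrac12\|N\|_F^2$ is finite and continuous on all of $\Rmn$.
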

\cref{prop:D_uniqueness} provides a simple sufficient condition for the uniqueness of a solution to \cref{prob:nonneg_opt_low}, which in many applications is fulfilled (see~\cref{sec:hankel}). However, this is not a necessary condition. A proof of \cref{prop:D_uniqueness} is given in a more general setting in~\cref{prop:DM_relation}, which also allows us to say something about the rank of the solution to the convex relaxation if there is a duality-gap.
\begin{thm}
	Let $D^\opts$ and $M^\opts$ be solutions to \cref{eq:dual_problem} and \cref{eq:dualdaul}, respectively. Further, suppose that an SVD of $N+D^\opts$ is given by $N+D^\opts = \sum_{i=1}^{n} \sigma_iu_i v_i^{\transp}$ with 
	$\sigma_r = \dots = \sigma_{r+s} \neq \sigma_{r+s+1},$ where $s=n-r$ if $\sigma_{n} = \sigma_{r}$. Then,
	$$M^\opts \in \conv(\svd_r(N+D^\opts)).$$
	In particular, $\rk(M^\opts)\leq r+s$. Moreover, if $\sigma_r \neq \sigma_{r+1}$ or $\sigma_r = 0$, then $M^\opts = \svd_r(N+D^\opts).$
	\label{prop:DM_relation}
\end{thm}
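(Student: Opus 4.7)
The plan is to invoke Fenchel extremality from Proposition~\ref{prop:main} and then exploit the rigidity of equality in the dual-norm and von Neumann trace inequalities to pin down the matrix structure of $M^\opts$. Since \cref{eq:dual_problem} and \cref{eq:dualdaul} are Fenchel duals with no duality gap (Proposition~\ref{prop:main}), the extremality relation $M^\opts\in\partial f^{\ast}(D^\opts)$ must hold. By \cref{eq:fconj} this is nothing but $M^\opts\in\partial\tfrac{1}{2}\|\cdot\|_r^2(N+D^\opts)$, and the Fenchel--Young identity combined with \cref{eq:fbiconj} gives
\begin{equation*}
\tfrac{1}{2}\|N+D^\opts\|_r^2+\tfrac{1}{2}\|M^\opts\|_{r\ast}^2=\langle N+D^\opts,M^\opts\rangle.
\end{equation*}
Coupling this identity with the dual-norm bound $\langle N+D^\opts,M^\opts\rangle\le\|N+D^\opts\|_r\,\|M^\opts\|_{r\ast}$ and AM--GM forces $\|N+D^\opts\|_r=\|M^\opts\|_{r\ast}$ together with equality in the dual-norm inequality.

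Next I would further upper bound $\langle N+D^\opts,M^\opts\rangle$ by $\sum_i\sigma_i(N+D^\opts)\sigma_i(M^\opts)$ via von Neumann's trace inequality, and then by $\|N+D^\opts\|_r\,\|M^\opts\|_{r\ast}$ through a two-step estimate: first replace $\sigma_i(N+D^\opts)$ by $\sigma_r(N+D^\opts)$ for $i>r$ (valid by monotonicity of singular values), then apply Cauchy--Schwarz to the pair of length-$r$ vectors $(\sigma_1(N+D^\opts),\dots,\sigma_r(N+D^\opts))$ and $(\sigma_1(M^\opts),\dots,\sigma_{r-1}(M^\opts),\sum_{i\ge r}\sigma_i(M^\opts))$, whose $\ell_2$-norm of the second vector equals $\|M^\opts\|_{r\ast}$ by the explicit formula in \cref{lem:norm}. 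Since the overall chain is tight, every inequality must actually be an equality. This forces (i) $N+D^\opts$ and $M^\opts$ to admit a simultaneous SVD $N+D^\opts=\sum_i\sigma_iu_iv_i^{\transp}$, $M^\opts=\sum_i\tau_iu_iv_i^{\transp}$; (ii) $\tau_i=0$ whenever $\sigma_i<\sigma_r$; and (iii) the two length-$r$ vectors above are proportional.

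Under the hypothesis $\sigma_r=\dots=\sigma_{r+s}\neq\sigma_{r+s+1}$ together with $\|M^\opts\|_{r\ast}=\|N+D^\opts\|_r$, conditions (ii)--(iii) collapse to $\tau_i=\sigma_i$ for $i<r$, $\tau_i=0$ for $i>r+s$, and $\sum_{i=r}^{r+s}\tau_i=\sigma_r$. Setting $U_2:=[u_r,\dots,u_{r+s}]$ and $V_2:=[v_r,\dots,v_{r+s}]$, this yields
\begin{equation*}
M^\opts=\sum_{i=1}^{r-1}\sigma_iu_iv_i^{\transp}+\sigma_rU_2PV_2^{\transp}
\end{equation*}
for some $P\succeq 0$ with $\trace P=1$ (diagonal in the chosen basis). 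Because the SVD of $N+D^\opts$ in the repeated block is determined only up to a common orthogonal change of basis $U_2\mapsto U_2O$, $V_2\mapsto V_2O$, the set $\svd_r(N+D^\opts)$ consists of all $\sum_{i<r}\sigma_iu_iv_i^{\transp}+\sigma_r(U_2q)(V_2q)^{\transp}$ with $\|q\|=1$, and its convex hull is exactly the set obtained by letting $P$ range over all PSD matrices of trace one. Hence $M^\opts\in\conv(\svd_r(N+D^\opts))$ and $\rk M^\opts\le(r-1)+\rk P\le r+s$. The special cases $\sigma_r\neq\sigma_{r+1}$ (so $s=0$ and $P=1$) and $\sigma_r=0$ (where the last term vanishes) both reduce $M^\opts$ to the unique SVD truncation $\svd_r(N+D^\opts)$. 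The main obstacle will be the rigidity analysis of the middle paragraph, where equality in von Neumann's and Cauchy--Schwarz's inequalities must be carefully unpacked to promote the conclusion from a mere singular-value identification to the concrete matrix identification that matches the convex-hull description of $\svd_r(N+D^\opts)$.
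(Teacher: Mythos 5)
Your skeleton is the same as the paper's: Fenchel extremality gives $M^\opts\in\partial f^\ast(D^\opts)=\partial\bigl(\tfrac{1}{2}\|N+\cdot\|_r^2\bigr)(D^\opts)$, and the theorem then follows from a characterization of this subdifferential. The paper stops there and simply invokes \cref{prop:sub_diff_rnorm} (the known subdifferential formula for $\|\cdot\|_r$ from [doan2016finding]), which states exactly that $\partial\|A\|_r=\frac{1}{\|A\|_r}\conv(\svd_r(A))$; you instead re-derive that characterization from scratch via equality conditions in von Neumann and Cauchy--Schwarz. That is a legitimate and more self-contained route, and your first paragraph (Fenchel--Young identity, AM--GM forcing $\|N+D^\opts\|_r=\|M^\opts\|_{r\ast}$ and tightness of the dual pairing) and your last paragraph (the convex-hull description of $\svd_r$ over the degenerate block via PSD trace-one matrices $P$) are both correct.

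The gap is in the middle step, and it is exactly where you predicted trouble. Your chain terminates with Cauchy--Schwarz applied to $(\sigma_1(A),\dots,\sigma_r(A))$ and $v:=(\sigma_1(M),\dots,\sigma_{r-1}(M),\sum_{i\ge r}\sigma_i(M))$, and you assert $\|v\|_2=\|M\|_{r\ast}$ "by the explicit formula in \cref{lem:norm}". This is false in general: the maximization in that formula is over $s_i=\sigma_i(X)$ for a matrix $X$, so the $s_i$ are implicitly constrained to be non-increasing and nonnegative, and the unconstrained Cauchy--Schwarz maximizer $v/\|v\|_2$ violates this whenever $\sum_{i\ge r}\sigma_i(M)>\sigma_{r-1}(M)$. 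Concretely, for $M=I_3$ and $r=2$ one has $\|v\|_2=\sqrt{5}$ while $\|M\|_{2\ast}=3/\sqrt{2}<\sqrt{5}$. Consequently your chain ends at $\|A\|_r\|v\|_2\ge\|A\|_r\|M\|_{r\ast}$ rather than at $\|A\|_r\|M\|_{r\ast}$, so the sandwich does not close and "every inequality is an equality" does not follow as written; conditions (ii)--(iii) and the proportionality constant $1$ are then unjustified. The repair is to bound the step-two quantity by the dual-norm definition directly: write $\sum_{i<r}\sigma_i(A)\sigma_i(M)+\sigma_r(A)\sum_{i\ge r}\sigma_i(M)=\langle T,M'\rangle$ with $T:=\sum_{i<r}\sigma_i(A)u_iv_i^{\transp}+\sigma_r(A)\sum_{i\ge r}u_iv_i^{\transp}$ and $M'$ sharing that SVD frame; since $\|T\|_r=\|A\|_r$, the dual-norm inequality gives $\langle T,M'\rangle\le\|A\|_r\|M\|_{r\ast}$, the chain closes, and the equality analysis of this last step (which singular-value vectors of $X$ with $\|X\|_r=1$ attain the supremum defining $\|M\|_{r\ast}$) then yields your conditions (ii)--(iii) with the ordering constraint correctly accounted for. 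With that fix the argument goes through and recovers the paper's conclusion without citing \cref{prop:sub_diff_rnorm}.
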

A proof to this theorem is given in~\cref{subsec:proof_DM_relation}.
Observe that whenever \cref{eq:dualdaul} does not have a unique solution, it follows by \cref{prop:DM_relation} that $$\sigma_r(N+D^\opts) = \sigma_{r+1}(N+D^\opts)$$ for all solutions $D^\opts$ to \cref{eq:dual_problem}. Furthermore, \Cref{prop:DM_relation} shows that $\svd_r(N)$ with $\sigma_r(N) \neq \sigma_{r+1}(N)$ can be determined by solving a convex problem.
\begin{cor}
	\label{prop:dual_problem}
	Let $N \in \Rmn$, and $r \in \{1,\dots, n\}$. Then, 
	\begin{align*}
	\min_{ \substack{M\in \Rmn \\\rk(M) \leq r}}\frac{1}{2}\|N-M\|^2_F \; = \;  \frac{1}{2}\| N  \|_F^2-\frac{1}{2}\|N\|_r^2 \; 
	 =  \; \min_{M \in \Rmn} \left[ \frac{1}{2}\|M\|_{r\ast}^2 - \langle N,M \rangle + \frac{1}{2}\|N\|_F^2 \right]	
	\end{align*}
	and $$\svd_r(N) \subset \argmin_{M \in \Rmn} \left[\frac{1}{2}\|M\|_{r \ast}^2-\langle N,M\rangle\right].$$
	If $\sigma_r(N) \neq \sigma_{r+1}(N)$ or $\sigma_{r} =~0$ then 
	\begin{equation*}
	\svd_r(N) = \argmin_{M \in \Rmn} \left[\frac{1}{2}\|M\|_{r \ast}^2-\langle N,M\rangle\right].
	\end{equation*}
\end{cor}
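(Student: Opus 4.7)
The plan is to derive \cref{prop:dual_problem} as the $g \equiv 0$ specialization of \cref{prop:main,prop:DM_relation}, with \cref{prop:Schmidt-Mirsky} supplying strong duality by hand.

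First, for $g \equiv 0$ one computes $g^\ast(D) = \chi_{\{0\}}(D)$, so the dual objective in \cref{eq:dual_problem} forces $D = 0$ and attains the value $\tfrac12 \|N\|_r^2 - \tfrac12 \|N\|_F^2$. Negating gives $\tfrac12 \|N\|_F^2 - \tfrac12 \|N\|_r^2$, which by \cref{prop:Schmidt-Mirsky} together with the identity $\|N\|_F^2 - \|N\|_r^2 = \sum_{i=r+1}^n \sigma_i^2(N)$ is precisely the primal minimum. Hence the inequality in \cref{eq:duality_gap} is tight, i.e.\ strong duality holds, and the two claimed equalities then follow at once from the chain in \cref{prop:main}.

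For the inclusion $\svd_r(N) \subset \argmin_M [\tfrac12 \|M\|_{r\ast}^2 - \langle N, M\rangle]$, I would pick any $M^\opts \in \svd_r(N)$. By \cref{prop:Schmidt-Mirsky} it has $\rk(M^\opts) \leq r$ and attains the primal minimum; by strong duality it therefore also attains the bi-dual minimum \cref{eq:dualdaul}. The hypothesis of the last clause of \cref{prop:main} is thus satisfied, and the containment stated there (after dropping the constant $\tfrac12\|N\|_F^2$, which does not affect $\argmin$) yields exactly the claimed inclusion.

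Finally, under the non-degeneracy assumption $\sigma_r(N) \neq \sigma_{r+1}(N)$ or $\sigma_r(N) = 0$, I would invoke \cref{prop:DM_relation} with the forced dual optimum $D^\opts = 0$. Since $\sigma_i(N+D^\opts) = \sigma_i(N)$ for all $i$, the spectral-gap hypothesis of the final assertion of \cref{prop:DM_relation} is met, so every minimizer of \cref{eq:dualdaul} coincides with the unique standard SVD-approximation $\svd_r(N)$, giving the reverse inclusion and hence set equality. The only mild subtlety is verifying strong duality via \cref{prop:Schmidt-Mirsky}; the remainder is a clean specialization of previously proved results.
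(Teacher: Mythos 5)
Your proposal is correct and follows essentially the same route as the paper, which likewise observes that $g\equiv 0$ forces $D^\opts=0$ in the dual and then invokes \cref{prop:DM_relation} (your version simply spells out the strong-duality step via \cref{prop:Schmidt-Mirsky} and the role of \cref{prop:main}, which the paper leaves implicit).
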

\begin{proof}
Since $g = 0$, $g^\ast(D)$ is finite if and only if $D = 0$. Thus the result follows by \Cref{prop:DM_relation}.
\end{proof}
The low-rank inducing property of the $r\ast$ norm can also be seen by characterizing the extreme points of its unit ball.  
\begin{lem}
	\label{lem:convhull}
	The set of the extreme points of the unit-ball $B_1 := \{X: \normrast{X}\leq 1 \}$ is $$E := \{X \in \Rmn : \|X\|_{F} = 1, \ \rk(X) \leq r \}.$$
	Hence, $B_1 = \conv(E)$.	
\end{lem}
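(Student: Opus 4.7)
The plan is to establish the equality $B_1 = \conv(E)$ first via a support function argument, and then deduce the extreme point characterization by combining strict convexity of the Frobenius norm with Minkowski's theorem. The inclusion $E \subset B_1$ is immediate from \cref{eq:rank_norm}: if $\rk(X) \leq r$ and $\|X\|_F = 1$, then $\|X\|_{r\ast} = \|X\|_F = 1$, so $X \in B_1$, whence $\conv(E) \subset B_1$ by convexity of $B_1$.

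For the reverse inclusion I would compare support functions. On the one hand, the support function of $B_1$ at $M \in \Rmn$ is, by definition of the dual norm, $\sup_{X \in B_1} \langle M, X \rangle = \|M\|_r$. On the other hand, the support function of $\conv(E)$ at $M$ equals $\sup_{X \in E} \langle M, X \rangle$, which by \cref{eq:firstass} is again $\|M\|_r$. Since $E$ is closed (as the intersection of the closed Frobenius unit sphere with the closed set $\{X : \rk(X) \leq r\}$) and bounded, it is compact; Carath\'eodory's theorem then gives that $\conv(E)$ is compact and in particular closed. Two closed convex sets with identical support functions coincide, yielding $B_1 = \conv(E)$.

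To characterize the extreme points, I would take any $X \in E$ and suppose $X = \tfrac{1}{2}(Y + Z)$ for some $Y, Z \in B_1$. By \cref{eq:norm_ineq}, $\|Y\|_F \leq \|Y\|_{r\ast} \leq 1$ and similarly $\|Z\|_F \leq 1$, while $1 = \|X\|_F \leq \tfrac{1}{2}(\|Y\|_F + \|Z\|_F) \leq 1$. Equality throughout forces $\|Y\|_F = \|Z\|_F = 1$, after which strict convexity of the (inner-product-induced) Frobenius unit ball forces $Y = Z = X$, so $X$ is extreme. Conversely, since $B_1 = \conv(E)$ is a compact convex set, Minkowski's theorem guarantees that every extreme point of $B_1$ lies in the generating set $E$. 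Combining both directions gives $\ext(B_1) = E$.

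The main subtlety I anticipate is ensuring $\conv(E)$ is closed before equating it with $B_1$ via support functions; in finite dimensions this is handled for free by compactness of $E$ together with Carath\'eodory, but it is worth pointing out explicitly, since otherwise the argument would only deliver $\overline{\conv(E)} = B_1$. Apart from that, the proof amounts to a standard duality plus strict-convexity routine.
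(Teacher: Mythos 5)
Your proposal is correct and follows essentially the same route as the paper: both establish $B_1=\conv(E)$ by matching support functions (using \cref{eq:firstass}) and then identify the extreme points via the equality case of an inner-product inequality (your triangle-inequality/strict-convexity step is the same mechanism as the paper's Cauchy--Schwarz argument). You are in fact slightly more careful than the paper in two places -- justifying closedness of $\conv(E)$ via compactness of $E$ and Carath\'eodory, and explicitly supplying the converse inclusion $\ext(B_1)\subset E$ -- both of which the paper leaves implicit.
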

\begin{proof}
	By \cref{eq:firstass} in \Cref{lem:norm}, it holds that for all $N \in \Rmn$ 
	\begin{align}
	\sup_{M \in \conv(E)} \langle N, M\rangle =  \|N\|_r = \sup_{M \in B_{1}} \langle N, M\rangle. \label{eq:lem_norm_cov}
	\end{align}
	Since $\conv(E)$ and $B_{1}$ are closed convex sets, \cite[Corollary~13.1.1.]{rockafellar1970convex} implies that $B_{1} = \conv(E)$. If a point $\bar{M} \in E$ is not an extreme point of $E$, then 
	\begin{equation*}
	\textstyle \bar{M} = \sum_{i} \alpha_i M_i, \quad \text{with} \quad \sum_{i} \alpha_i = 1,
	\end{equation*}
	such that
	$$\ M_i \in K \setminus \lbrace \bar{M} \rbrace \quad \text{and} \quad \alpha_i > 0 \ \text{for all }i.$$
	Hence, by the Cauchy-Schwarz inequality we conclude that 
	\begin{align*}
	\textstyle 1 = \langle \bar{M}, \bar{M} \rangle = \sum_{i} \alpha_i \langle \bar{M},  M_i \rangle \leq \sum_{i} \alpha_i = 1.
	\end{align*}
	However, this can only be true if $\langle \bar{M} , M_i \rangle  = 1$ for all $i$. Equivalently, $\bar{M} = M_i$ and that is a contradiction. 
\end{proof}
Finally, the preceding results cover several extensions of \cref{prob:nonneg_opt_low}. By letting $N = \diag(v)$ and $M = \diag(w)$ for \linebreak $v,w \in \mathbb{R}^n$, there are analogous norms for vector-valued problems, (see~e.g.~\cite{argyriou2012sparse,laurent2009group}) where our analysis carries over. Further, it is possible to consider the weighted case
\begin{align}
\min_{ \substack{M\in \Rmn \\\rk(M) \leq r}}\left[\frac{1}{2}\|W(N-M)\|^2_F +g(M) \right], \label{eq:extend_norm}
\end{align}
where $W \in \mathbb{R}^{l \times n}$ and $\rk(W) = n$. Since $\rk(\tilde{M}) = \rk(W^\dagger \tilde{M}) = \rk(M)$, \cref{eq:extend_norm} can be reformulated such that it fits \cref{prob:nonneg_opt_low} by letting $\tilde{g}(\tilde{M}) := g(W^\dagger \tilde{M})$:
\begin{align*}
\min_{ \substack{M\in \Rmn \\\rk(M) \leq r}} \left[  \frac{1}{2}\|W(N-M)\|^2_F +g(M) \right] \; = \; \min_{ \substack{\tilde{M} \in \Rmn \\\rk(\tilde{M}) \leq r}} \left[\frac{1}{2}\|WN - \tilde{M}\|^2_F + \tilde{g}(\tilde{M})\right].
\end{align*}
Since another inner product and norm is defined by $W$ as
\begin{align*}
\|W(N-M)\|^2_F &= \trace((N-M)^{\transp}W^{\transp}W(N-M)) \\
 &=: \langle N-M,N-M \rangle_{W^{\transp}W},
\end{align*}
a suitable $W$ may enable us to satisfy the requirements of \cref{prop:D_uniqueness} in situations where the Frobenius~norm fails. In particular, $W$ may be used for iterative re-weighting. For vector-valued problems, this generalizes the idea of $\ell_1$ norm re-weighting (see~\cite{candes2008enhancing}) to $r\ast$ norms.

\section{Real-valued Extension} 
\label{subsec:reg-interp}
In the following, it is shown that allowing $r$ to be real-valued can be considered as a regularization parameter. Unlike typical regularization methods (see~\cite{fazel2001rank,larsson2016convex}), this parameter has a close relationship to the rank of the corresponding solutions.

It suffices to discuss the case where \cref{prop:D_uniqueness} does not apply. Therefore, let $$D_t^\opts := \argmin_{D \in
	\Rmn} \left[g^\ast(-D)  + \frac{1}{2}\|N+D\|_t^2 \right],$$
{and}
$$M_t^\opts := \argmin_{M \in \Rmn} \left[\frac{1}{2}\|M\|_{t\ast}^2 - \langle N,M \rangle  +g(M)\right].$$
be defined for all $t \in \{1,\dots n\}$, and assume that there exists $r \in \mathbb{N}$ with $$\sigma_r(N+D_r^\opts) = \sigma_{r+1}(N+D_r^\opts) \ \text{ and } \ \rk(M_r^\opts) > r.$$
Furthermore, let $$\frac{1}{2}\|N-M_r^\opts\|_F^2 +g(M_r^\opts) > \frac{1}{2}\|N-M_{r+1}^\opts\|_F^2 +g(M_{r+1}^\opts)$$ with $$\rk(M_{r+1}^\opts) > \rk(M_{r}^\opts).$$ In such a scenario, one often faces the situation that $\rk(M_r^\opts)$ is small, but the cost $\frac{1}{2}\|N-M_{r}^\opts\|_{F}^2+g(M_r^\opts)$ is poor, whereas $\frac{1}{2}\|N-M_{r+1}^\opts\|_{F}+g(M_{r+1}^\opts)$ may be acceptable, but $\rk(M_{r+1}^\opts)$ is too large. Then a trade-off between $M_r^\opts$ and $M_{r+1}^\opts$ is desired. Such a trade-off can be achieved by letting $r$ become non-integer valued in the $r$~norm. The $r$~norm extends to
\begin{align}
\| \cdot \|_r := \sqrt{\sum_{i=1}^{\lfloor r \rfloor} \sigma_i^2(\cdot) + (r - \lfloor r \rfloor) \sigma_{\lceil r \rceil}^2(\cdot)}, \label{eq:rnorm_real}
\end{align}
where $\lfloor r \rfloor := \max \lbrace z \in \mathbb{Z}: z \leq r \rbrace$ and $\lceil r \rceil := \min \lbrace z \in \mathbb{Z}: z \geq r \rbrace$. For $r \in \mathbb{N}$ and $\alpha \in [0,1]$ we have
\begin{align}
\| \cdot \|_{r+\alpha}^2 = (1-\alpha) \| \cdot \|_{r}^2 + \alpha\| \cdot \|_{r+1}^2, \label{eq:rnorm_conv}
\end{align}
which means that $\| \cdot \|_{r+1-\alpha}^2$ is a convex combination of $\| \cdot \|_{r}^2$ and $\| \cdot \|_{r+1}^2$, and thus indicates its usefulness in supplying the desired trade-off solution. Similar to \cref{prop:DM_relation}, it remains true by \cref{prop:sub_diff_rnorm} that $\rk(M_r^\opts) \leq \lceil r \rceil + s$ if $r\in \mathbb{R}_{\geq 1}$ and
\begin{align}
\sigma_{\lceil r \rceil}(N+D_r^\opts) = \dots = \sigma_{\lceil r \rceil+s}(N+D_r^\opts) > \sigma_{\lceil r \rceil+s+1}(N+D_r^\opts). \label{eq:sigma_eq}
\end{align}
Hence, allowing $r$ to assume real values may allow us to find solutions of both lower rank and lower cost. Next we look at the dependency of $s$ on $r$ in \cref{eq:sigma_eq}. We define
\begin{align*}
F(D,r) &:= g^\ast(-D)  + \frac{1}{2}\|N+D\|_r^2 + \frac{1}{2} \|N\|_F^2.
\end{align*}
Using the piecewise linearity in \cref{eq:rnorm_conv}, it can be shown that $F$ is (jointly) continuous on the relative interior of its domain. Therefore, Berge's Maximum Theorem (see~ \cite[p. 116]{berge1963topological}) implies that the parameter depending set
\begin{align*}
\mathcal{C}^\opts(r) := \argmin_{D \in
	\Rmn} \left[g^\ast(-D)  + \frac{1}{2}\|N+D\|_r^2 + \frac{1}{2} \|N\|_F^2 \right]
\end{align*}
is upper hemicontinuous in $r$. This means that for all \linebreak $r \in [1, \minmn]$ and all $\varepsilon > 0$ there exists $\delta > 0$ such that for all $t \geq 1$
\begin{align}
|t-r| < \delta \Rightarrow \mathcal{C}^\opts(t) \subset \mathcal{B}_{\varepsilon}  \left(\mathcal{C}^\opts(r) \right), \label{eq:neigh}
\end{align}
where $$\mathcal{B}_{\varepsilon}  \left(\mathcal{C}^\opts(r) \right) := \left\lbrace X \in \Rmn :\exists D \in \mathcal{C}^\opts(r) \ \textnormal{with} \ \|X-D\|_F < \varepsilon \right\rbrace.$$
For simplicity assume that $D_r^\opts$ is unique. By \cref{eq:neigh} and the continuity of the singular values (see~\cite[Corollary~4.9]{stewart1990matrix}), it follows that a sufficiently small increase of $r$ does not increase $s$ in \cref{eq:sigma_eq}. Hence, just as for nuclear~norm regularization, $\rk(M_t^\opts)$ often looks like a staircase as $t$ varies over $[r,r+1]$ (see~\cref{fig:hankel_rank} in~\Cref{sec:hankel}).
In summary, real-valued $r$ can be considered as a regularization parameter, similar to other regularization methods such as in~\cite{fazel2001rank,larsson2016convex}. 
\section{SDP-Representations}
\label{sec:comp}
Next we develop SDP-representations of the problems \cref{eq:dual_problem,eq:dualdaul} under the assumption that $g$ is SDP-representable. 
We start with an SDP-representation of the optimization problem
\begin{equation}
\label{eq:SDP_D}
\min_{D \in \Rmn} \|N+D\|_r^2,
\end{equation}
where $\|\cdot \|_r$ is defined as in~\cref{eq:rnorm_real} and $r \in [1,n]$. Let $T \in \Rnn$ be such that 
\begin{equation*}
T \succeq (N+D)(N+D)^{\transp}.
\end{equation*}
Then $\sigma_i(T) \geq \sigma_i^2(N+D)$ for all $i$ such that $1\leq i \leq n$ (see~\cite[Corollary 7.7.4]{horn2012matrix}) and $\trace(T) = \sum_{i=1}^n \sigma_i(T)$. Hence,
\begin{align*}
\|N+D\|_r^2 
&\leq \trace(T) - (n - r) \sigma_{n}(T),
\end{align*}
which implies that
\begin{align}
\|N+D\|_r^2 \leq \min_{T \succeq (N+D)(N+D)^{\transp}}  \trace(T) - (n-r) \sigma_{n}(T). \label{eq:rnorm_T_ineq}
\end{align} 
In particular, equality in~\cref{eq:rnorm_T_ineq} can be achieved with $$T^{\opts} := \sum_{i=1}^{\lceil r \rceil} \sigma_i^2(N+D) u_i u_i^{\transp} + \sigma_{\lceil r \rceil}^2(N+D)\sum_{i=\lceil r \rceil+1}^{n}u_i u_i^{\transp},$$ where $N+D = \sum_{i=1}^{n} \sigma_i(N+D) u_i v_i^{\transp}$ is an SVD of $N+D$. Using the Schur-complement condition for $T - (N+D)(N+D)^{\transp} \succeq0$ (see~\cite[Theorem~7.7.7]{horn2012matrix}) yields that
\begin{equation*}
\begin{aligned}
& {\underset{D,T,\gamma}{\textnormal{minimize}}}
& & \trace(T) - \gamma (n-r)\\
& \textnormal{subject to}
& & \begin{pmatrix}
T & N+D \\
(N+D)^{\transp} & I
\end{pmatrix} \succeq 0, \ T \succeq \gamma I , \
D \in \Rmn.  
\end{aligned}
\end{equation*}
is an SDP-representation for \cref{eq:SDP_D}. Then, an SDP-formulation of \cref{eq:dualdaul} can be obtained by deriving the dual of this optimization problem as 
\begin{equation}
\label{SDP:rast}
\begin{aligned}
& {\underset{M,P,W}{\textnormal{minimize}}}
& & \frac{1}{2}\trace(W) - \trace(N^{\transp}M) +g(M)\\
& \textnormal{subject to}
& & \begin{pmatrix}
I-P & M \\
M^{\transp} & W
\end{pmatrix} \succeq 0, \ P \succeq 0, \ \trace(P) = n-r.
\end{aligned}
\end{equation}

\section{Model Order Reduction}
\label{sec:hankel}
In system and control, the rank of a Hankel matrix/operator is important, because it determines the order, e.g. of a linear time invariant discrete-time system
\begin{equation}\label{eq:lin_sys}
\begin{aligned}
x_{k+1} &= Ax_k + Bu_k,\\
y_k   &= Cx_k + Du_k,
\end{aligned}
\end{equation}
where $A \in \mathbb{R}^{n\times n}$, $B \in \mathbb{R}^{n \times n_u}$, $C \in \mathbb{R}^{n_y \times n}$ and $D \in \mathbb{R}^{n_y \times n_u}$. Note that if $(A,B,C,D)$ is a minimal realization, then $n$ is the order of the system and thus decides how costly it is to simulate or control the system (see,~e.g.~\cite{antoulas2005approximation,ankelhed2011design,kung1987identification}). As a result, the field of model order reduction has emerged \cite{antoulas2005approximation}. Whereas the Adamyan-Arov-Krein theorem \cite{antoulas2005approximation} answers the question of optimal low-rank approximation of infinite dimensional Hankel operators, the finite dimensional case
 \begin{equation}
 \label{prob:hankel}
 \begin{aligned}
 & \underset{M}{\textnormal{minimize}}
 & & \|N-M\|_F^2 \\
 & \textnormal{subject to}
 & & M \in \mathcal{H}, \ \rk(M) \leq r,\\
 \end{aligned}
 \end{equation}	
where $N \in \mathcal{H} := \lbrace H: H \textnormal{ is Hankel} \rbrace$, is still an open problem \cite{blondel2012open}.
The finite dimensional case \cref{prob:hankel} is important, e.g for model approximation or system identification (see \cite{kung1987identification,miller2012identification}), where $N$ is formed through the known or measured impulse response, $h_0 = D, \ h_t = CA^{t-1}B, \ t \geq 1$, of a stable linear system \cref{eq:lin_sys}:
\begin{equation*}
N = H_{k,l+1} := \begin{pmatrix}
h_1 & h_2 & \cdots & h_{l+1}\\
h_{2} & h_{3} & \cdots & h_{l+2}\\
\vdots & \vdots &   & \vdots\\
h_{k}   & h_{k+1} & \cdots & h_{k+l}
\end{pmatrix}.
\end{equation*}
Assuming that $k,l \geq n$, it holds that $\rk(N) \leq n$ and $N$ can be mapped onto a minimal realization of \cref{eq:lin_sys} through Kung's (or Ho-Kalman-Kung) algorithm \cite{kung1987identification,miller2012identification,antoulas2005approximation}. Moreover, also a Hankel structured rank-$r$ approximation $M^\opts$ of $N$ can be mapped by Kung's algorithm onto a linear system $(\hat{A},\hat{B},\hat{C},\hat{D})$ of order $r$ if $\rk \left(M^\opts_{(1:r n_y,1:r n_u)} \right)= r$. The system matrices are derived as
\begin{equation}
\label{eq:red_sys_mat}
\begin{aligned}
\hat{A} &= O^\dagger M^\opts_{(1:k,2:l+1)} R^\dagger, &  \hat{B} &= R_{(1:r,1:n_u)}, \\
\hat{C} &= O_{(1:n_y,1:r)}, &  \hat{D} &= h_0,
\end{aligned}
\end{equation}
where an SVD of $M^\opts_{(1:k,1:l)} = \sum_{i=1}^{r} \sigma_r u_i v_i^\transp$ determines
\begin{align*}
O := \begin{pmatrix}
\sigma_1 u_1  &  \dots & \sigma_r u_r
\end{pmatrix} \quad \textnormal{and} \quad R := \begin{pmatrix}
\sigma_1 v_1  &  \dots & \sigma_r v_r
\end{pmatrix}^\transp.
\end{align*}
Consequently, the impulse response matches $M^\opts$, i.e. it fulfils \begin{align*}
\hat{C} \hat{A}^{t-1} \hat{B} = \begin{cases}
M^\opts_{((t-1)n_y + 1 : t n_y, 1:n_u)}, & 1 \leq t \leq k, \\
M^\opts_{(1:n_y, (t-1) n_u + 1 : t n_u)}, & k+1 \leq t \leq k+l.
\end{cases}
\end{align*}
In the following, we compare the performance of the $r\ast$ approach \cref{eq:dualdaul} and nuclear norm regularization \cref{eq:nuc_heu} to balanced truncation for the minimal system \cref{eq:lin_sys} of order 10 with
\begin{align}
A = \diag(0,0.1,\dots,0.9), \ \ C = B^\transp = (1,\dots,1), \ \ D = 0 \label{sys:ex}
\end{align}
and $N = H_{71,71}$ being the intrinsic Hankel matrix. We use Kung's algorithm to map the Hankel matrix approximations of the convex methods onto systems and compare their $H_\infty$ norm errors (see~\cref{fig:hankel_error}) with balanced truncation. Further, we construct Hankel matrix approximations of $N$ from the balanced truncated models and compare their Frobenius norm errors with those of the convex methods (see~\cref{fig:hankel_error}).
  
By the rank evolution in \cref{fig:hankel_rank}, we can see that the $r\ast$ approach exhibits the expected staircase behaviour as discussed in \Cref{subsec:reg-interp}. Further, it can be observed that there is a zero duality gap for all $r \in \{1,\dots,9\}$. Thus by \cref{prop:main}, the Frobenius norm error in \cref{fig:hankel_error} is the lowest for the $r\ast$ approach. In particular, nuclear norm regularization performs 2 -- 34 times worse than BT, whereas the $r\ast$ approach has about 9 -- 16 \% smaller error than BT. An even stronger error difference reveals for the corresponding system errors, where our method performs 17 -- 39 \% better than BT and the nuclear norm 3 -- 200 times worse. Finally, the complete evolution of the normalized errors for the $r\ast$ approach in~\cref{fig:hankel_rank} shows that good approximations can be achieved for both the system as well as $N$. The small gap between the two errors, which increases with $r$, is due to the large sampling horizon in $N$. The horizon of 141 samples insures that the first 10 singular values in $N$ are close to the Hankel singular values of the system. 

Note that a larger horizon would improve the approximation quality even further. However, our chosen horizon seems to give a good trade-off between error performance and computational cost when solving \cref{SDP:rast} through conventional SDP solvers (see~e.g.~\cite{peaucelle2002user}). Moreover, a smaller horizon seems to mainly affect the quality of higher order approximations. 
Finally, note that our method does not necessarily need to reduce the original system. It could also be used after an initial reducing step through other methods \cite{antoulas2005approximation}. 

An implementation of our example can be found in~\cite{grussler2018github}.

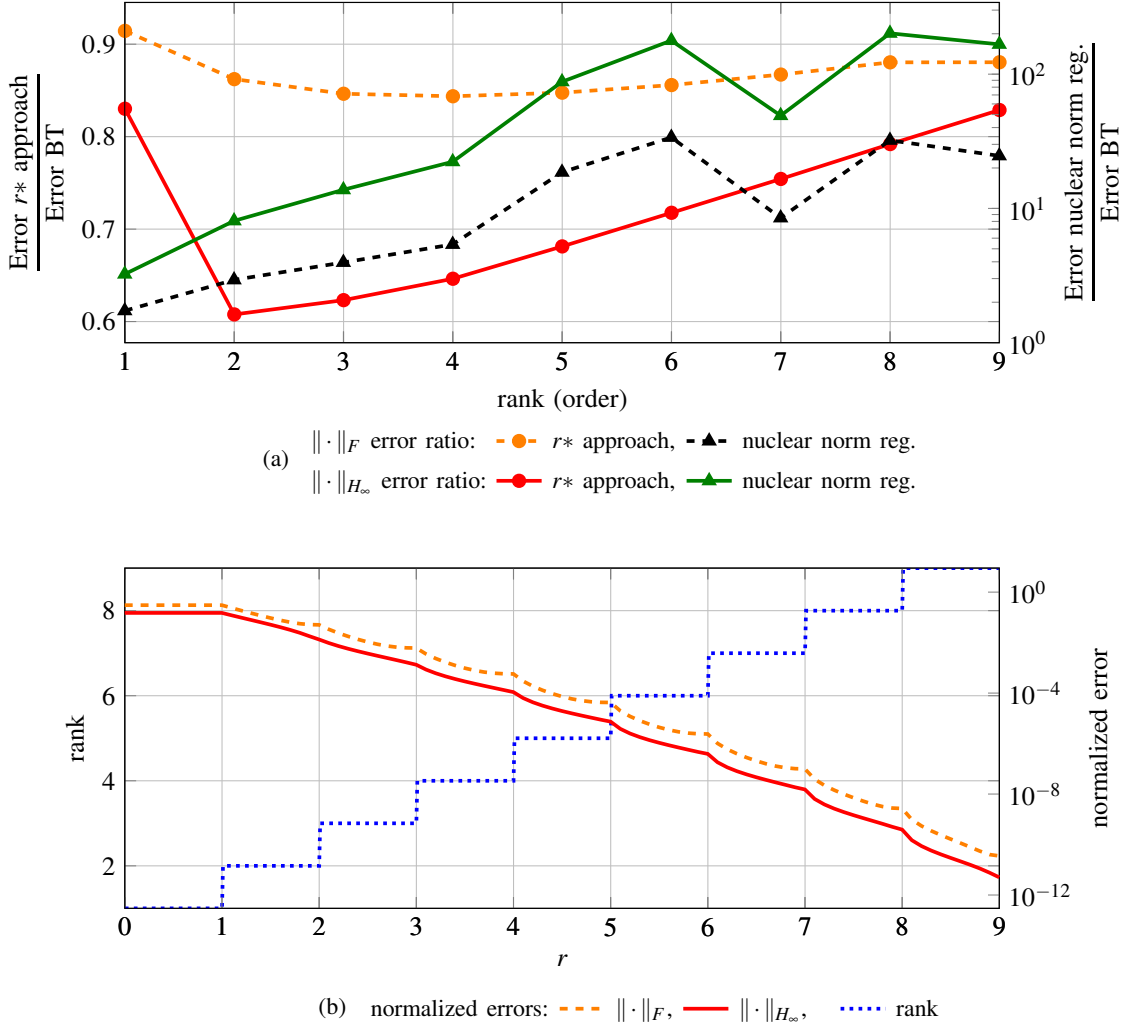
\begin{figure}
	\def\factor{.8}%
 	\def\factorb{.8}
 	\def\factorc{.37}
 	\def\factord{.37}
 		\centering
 	\begin{tikzpicture}

	\begin{groupplot}[group style={group name=my plots, group size=1 by 2,vertical sep = 3.0cm}]
		\nextgroupplot[xlabel=$\textnormal{rank (order)}$,
		ylabel style={align=center},
		ylabel= $\dfrac{\textnormal{Error $r\ast$ approach}}{\textnormal{Error BT}}$,
		xmin = 1,
		xmax = 9,
		grid = both,
		width = \factor \textwidth,
		height = \factord \textwidth,
		xtick = {1,...,9},
		scaled ticks=false, 
		]
		\addplot+[color = FigColor1b,mark = *, mark options={solid},dashed] file{MOR_err_mat_rstar_nom.txt};
		\label{mor_err_mat:rstar}
		
		\addplot+[color = FigColor1,mark = *, mark options={solid}] file{MOR_err_sys_rstar_nom.txt};
		\label{mor_err_sys:rstar}
		\label{mor_err_sys:base}
	
	\nextgroupplot[xlabel=$r$,	ylabel=$\rk$,
	xmin = 0,
	xmax = 9,
	grid = both,
	width = \factorb \textwidth,
	height = \factorc \textwidth,
	ymin = 1,
	ymax = 9,
	xtick ={0,1,2,...,10},
	ytick = {2,4,...,8},
	scaled ticks=false, 
	tick label style={/pgf/number format/fixed}]
	
	\addplot+[color = blue,mark = none,dotted] file{MOR_rank_rstar.txt};
	\label{mor_rank:rast}
	\end{groupplot}

	\begin{groupplot}[group style={group name=my plots, group size=1 by 2,vertical sep = 3.0 cm}]
	\nextgroupplot[xmin = 1,yticklabel pos = right,ytick pos = right,
	ylabel style={align=center},
	ylabel = $\dfrac{\textnormal{Error nuclear norm reg.}}{\textnormal{Error BT}}$,
	xmax = 9,
	xmin = 1,
	ymode = log,
	ymin = 1,
	width = \factor \textwidth,
	height = \factord \textwidth,
	xtick = {1,...,9},
	scaled ticks=false, ]
	\addplot+[color = FigColor4,mark = triangle*, mark options={solid}] file{MOR_err_sys_nuc_nom.txt};
	\label{mor_err_sys:nuc}
	
	\addplot+[color = FigColor3b,mark =  triangle*, mark options={solid},dashed] file{MOR_err_mat_nuc_nom.txt};
	\label{mor_err_mat:nuc}

	\nextgroupplot[yticklabel pos = right, ytick pos = right,
	xmin = 0,
	xmax = 9,
	ylabel = normalized error,
	width = \factorb \textwidth,
	height = \factorc \textwidth,
	ymode = log,
	ymax = 1,
	ymax = 9,
	ymode = log,
	scaled ticks=false, 
	tick label style={/pgf/number format/fixed}]
	\addplot+[color = FigColor1,mark = none] file{MOR_err_sys_rstar.txt};
	\label{MOR_err_sys_rel:rast}
	\addplot+[color = FigColor1b,mark = none, dashed] file{MOR_err_mat_rstar.txt};
	\label{MOR_err_mat_rel:rast}
	\end{groupplot}
	\node[text width=0.47 \textwidth ,align=center,anchor=north] at ([yshift=-7mm,xshift = -1 mm]my plots c1r1.south) {\subcaption{\setlength{\tabcolsep}{1.5 pt}
			\begin{tabular}{ll}
			 $\|\cdot\|_F$ error ratio: & 
			\ref{mor_err_mat:rstar} $r\ast$~approach, \ref{mor_err_mat:nuc} nuclear~norm reg.\\
			 $\|\cdot\|_{H_\infty}$ error ratio: &  \ref{mor_err_sys:rstar} $r\ast$~approach, \ref{mor_err_sys:nuc} nuclear~norm reg.\end{tabular}		
			\label{fig:hankel_error}}};%
	\node[text width=0.38 \textwidth ,align=center,anchor=north] at ([yshift=-7mm,xshift = -1 mm]my plots c1r2.south) 
	{\subcaption{\begin{tabular}{ll}
	normalized errors: \ref{MOR_err_mat_rel:rast} $\|\cdot\|_F$, \ref{MOR_err_sys_rel:rast} $\|\cdot\|_{H_\infty}$, &  \ref{mor_rank:rast} rank \label{fig:hankel_rank}
	\end{tabular}}};%
	\end{tikzpicture}
		\caption{Model Order Reduction for \cref{sys:ex} -- (a) Frobenius norm error of the best achievable approximations to $N = H_{71,71}$ in \cref{prob:hankel} through the $r\ast$ approach \cref{eq:dualdaul} (left y-axis in linear scale) as well as the nuclear norm regularization \cref{eq:nuc_heu} (right y-axis in log scale); $H_\infty$ norm error of the corresponding reduced order systems \cref{eq:red_sys_mat} resulting from Kung's algorithm; both errors are divided by the analogous errors of the balanced truncated (BT) models. (b) Evolution of the rank (left y-axis) as well as the normalized Frobenius and $H_\infty$ norm errors (right y-axis in log scale) of the $r \ast$ approach with real-valued $r \in (0,9]$.}
\end{figure}

\section{Conclusion}
\label{sec:conclusion}
In this work, a method for determining Frobenius norm optimal low-rank approximations with convex constraints has been studied. The main benefits of our approach are  that it is essentially regularization parameter free and may give a certificate of optimality. Moreover, we have seen that our approach can be turned into a regularization dependent method, where, unlike other approaches, the parameter has a direct relationship to the desired rank (see \cref{subsec:reg-interp}).
The model reduction example shows the superiority of our approach over the nuclear-norm heuristic as well as balanced truncation. In addition, our approach allows us to impose further convex constraints onto the impulse response. In the future, we would like to investigate the distinct properties of such approximations, e.g. error bounds, as well as their effectiveness in system identification problems. Furthermore, it would be interesting to see how system characteristics effect a possible duality gap.  
Most of our results can be extended to Hilbert-Schmidt operators. In case of Hankel operators, the singular values translate to Hankel singular values. Nevertheless, our results cannot be easily extended to other unitarily invariant norms, e.g. the spectral norm. This is because other norms often lack the following properties: (1) The norm of a difference of two matrices is not decomposable such that the convex envelope can be easily derived (see~\cite{grussler2016lowrank} for more details), (2) If the norm does not depend on all singular values, then even for $g = 0$ there are (infinitely) many solutions and thus minimizing its convex envelope would almost certainly result in a high rank convex combination of these solutions. Similar effects can be expected for $g \neq 0$. 

Finally note that our approach can also be used to numerically evaluate the performance of heuristics where no relationships to the optimal solutions are known.


%

\appendix
\subsection{Subdifferentials}
\label{subsec:subdiff}
Let $f: \Rmn \to \mathbb{R}\cup
\{\infty\}$ be a convex function, then the subdifferential of $f$ in $X\in \Rmn$ is defined as
\begin{equation*}
\partial f(X) := \{Z: f(Y) \geq f(X) - \langle Y-X,Z\rangle \text{ for all } Y \}.
\end{equation*}
The following proposition on the subgradiential of $\| \cdot \|_r$ has been shown in \cite{doan2016finding} for $r \in \mathbb{N}$. It is straightforward to extend it to the real-valued case.
\begin{prop}
		\label{prop:sub_diff_rnorm} 
		Let $A \in \Rmn \setminus \{0\} $, $r \in [1,n]$ and $\bar{r} := \lceil r \rceil$. Further, let an SVD of $A$ be given by  $A =  \sum_{i=1}^{n} \sigma_i u_i v_i^{\transp}$ with $$\sigma_{\bar{r}-t} \neq \sigma_{\bar{r}-t+1} = \dots = \sigma_{\bar{r}} = \dots = \sigma_{\bar{r}+s} \neq \sigma_{\bar{r}+s+1},$$ where $t = \bar{r}$ and $s=n-\bar{r}$ if $\sigma_1 = \sigma_{\bar{r}}$ and $\sigma_{n} = \sigma_{\bar{r}}$, respectively. Then $M \in \partial \|A\|_{r}$ if and only if
		\begin{align*}
		M &= \dfrac{1}{\|A\|_{r}}\left(\sum_{i=1}^{\bar{r}-t} \sigma_iu_i v_i^{\transp}+ \sigma_{\bar{r}}R \right),\\
		R &= \begin{pmatrix}
		u_{\bar{r}-t+1} & \dots & u_{\bar{r}+s}
		\end{pmatrix} T \begin{pmatrix}
		v_{\bar{r}-t+1} & \dots & v_{\bar{r}+s}
		\end{pmatrix} ^{\transp},
		\end{align*}
		where $T \succeq 0, \quad \|T\|_{1\ast} = t- \bar{r} +  r, \quad \text{and} \quad \|T\|_{1} \leq 1$. 
		Moreover,
		\begin{equation*}
		\partial \|0\|_{r} = \{M \in \Rmn:  \|M\|_{r\ast} \leq 1\}.
		\end{equation*}
	\end{prop}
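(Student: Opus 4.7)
The plan is to reduce the claim to the standard characterization of the subdifferential of a norm. I would first verify that $\|\cdot\|_r$, defined by the formula in~\cref{eq:rnorm_real}, is indeed a norm for every $r \in [1,n]$; this follows because equation~\cref{eq:rnorm_conv} expresses $\|\cdot\|_r^2$ as a convex combination of $\|\cdot\|_{\lfloor r\rfloor}^2$ and $\|\cdot\|_{\lceil r\rceil}^2$, both of which are squares of (unitarily invariant) norms by \Cref{lem:norm}, so $\|\cdot\|_r$ is the $\ell_2$ aggregate of norms and is itself a unitarily invariant norm with dual $\|\cdot\|_{r\ast}$. The $A=0$ case of the proposition is then immediate from the general fact that the subdifferential of a norm at the origin equals the unit ball of its dual norm.

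For $A \neq 0$, I would use the characterization $M \in \partial \|A\|_r \iff \|M\|_{r\ast} \leq 1$ and $\langle M, A\rangle = \|A\|_r$. The plan is to prove the two inclusions separately. For the ``$\supseteq$'' direction, take $M$ of the stated form. Using that $U_s := (u_{\bar{r}-t+1},\dots,u_{\bar{r}+s})$ and $V_s := (v_{\bar{r}-t+1},\dots,v_{\bar{r}+s})$ satisfy $U_s^{\transp} A V_s = \sigma_{\bar{r}} I_{t+s}$ (since this block consists entirely of singular values equal to $\sigma_{\bar{r}}$), the inner product computes as
\begin{equation*}
\langle M, A\rangle \;=\; \frac{1}{\|A\|_r}\!\left(\sum_{i=1}^{\bar{r}-t}\sigma_i^2 + \sigma_{\bar{r}}^2\,\trace(T)\right) \;=\; \frac{1}{\|A\|_r}\!\left(\sum_{i=1}^{\bar{r}-t}\sigma_i^2 + (t+r-\bar{r})\sigma_{\bar{r}}^2\right) \;=\; \|A\|_r,
\end{equation*}
where the last equality follows from rewriting $\|A\|_r^2 = \sum_{i=1}^{\bar{r}-t}\sigma_i^2 + (t+r-\bar{r})\sigma_{\bar{r}}^2$ using the multiplicity structure and the weights in~\cref{eq:rnorm_real}. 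For $\|M\|_{r\ast}\leq 1$, I would use that the term $\sum_{i=1}^{\bar{r}-t}\sigma_i u_i v_i^{\transp}$ and $\sigma_{\bar{r}}R$ have mutually orthogonal row and column spaces, so the singular values of $M$ are exactly $\sigma_i/\|A\|_r$ for $i\leq \bar{r}-t$ together with $\sigma_{\bar{r}}\lambda_j(T)/\|A\|_r$ for $j=1,\dots,t+s$; these are already sorted since $\lambda_j(T) \leq \|T\|_1 \leq 1$. Substituting into the dual-norm formula in \Cref{lem:norm} (suitably extended to real $r$ via~\cref{eq:rnorm_conv}) and invoking Cauchy--Schwarz with the constraint $\trace(T) = t+r-\bar{r}$ then yields $\|M\|_{r\ast}\leq 1$.

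For the ``$\subseteq$'' direction, suppose $M$ satisfies $\|M\|_{r\ast}\leq 1$ and $\langle M, A\rangle = \|A\|_r$. Von Neumann's trace inequality gives $\langle M, A\rangle \leq \sum_i \sigma_i(M)\sigma_i(A)$, and equality forces $M$ and $A$ to admit a simultaneous SVD in the sense that, after grouping within equal-singular-value blocks of $A$, $M$ is block-diagonal in the bases $\{u_i\},\{v_i\}$ with symmetric positive semidefinite blocks. This reduces the problem to a spectral/vector problem: maximize $\sum_i \tau_i\sigma_i$ over non-increasing $\tau\geq 0$ with $\|\tau\|_{r\ast,\text{vec}}\leq 1$. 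Equality in the Cauchy--Schwarz step coupled with the weight structure from~\cref{eq:rnorm_real} forces $\tau_i = \sigma_i/\|A\|_r$ for $i\leq \bar{r}-t$ (the ``sure'' coordinates lying strictly above the tied block) and pins the total weight carried by the tied block. Translating back to $M$ via the SVD ambiguity in the tied block yields precisely $M = \tfrac{1}{\|A\|_r}(\sum_{i=1}^{\bar{r}-t}\sigma_i u_i v_i^{\transp} + \sigma_{\bar{r}} U_s T V_s^{\transp})$ with $T\succeq 0$, $\trace(T) = t+r-\bar{r}$ and $\|T\|_1\leq 1$.

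The main obstacle is the uniqueness direction, specifically handling the multiplicities at $\sigma_{\bar{r}}$ together with the fractional weight $r-\lfloor r\rfloor$. The proof in~\cite{doan2016finding} handles the integer case, where the weight on $\sigma_{\bar{r}}^2$ is $1$ and the trace constraint becomes $\trace(T)=t$; the real-valued extension requires only that one keep track of the additional weight $(r-\lfloor r\rfloor)$ contributed by index $\lceil r\rceil$, which shifts the trace constraint to $t+r-\bar{r}$ but leaves the structural argument (simultaneous SVD plus the $T$-parametrization of the tied block) intact. Care must be taken at the boundary cases $\sigma_1 = \sigma_{\bar{r}}$ and $\sigma_{\bar{r}} = \sigma_n$, where $t = \bar{r}$ or $s = n-\bar{r}$ and the ``upper''/``lower'' block degenerates, but the same formulas remain valid.
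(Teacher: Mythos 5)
Your outline is sound, but note that the paper does not actually prove \cref{prop:sub_diff_rnorm}: it cites \cite{doan2016finding} for the integer case and simply asserts that the real-valued extension is straightforward, so your self-contained argument is doing strictly more work than the text. The route you take---the characterization $M\in\partial\|A\|_r \Leftrightarrow \|M\|_{r\ast}\le 1$ and $\langle M,A\rangle=\|A\|_r$, combined with von Neumann's trace inequality and its equality conditions---is the standard one and is presumably what \cite{doan2016finding} does for $r\in\mathbb{N}$; you also correctly isolate the only genuinely new ingredient for real $r$, namely that the fractional weight $r-\lfloor r\rfloor$ in \cref{eq:rnorm_real} shifts the trace constraint on $T$ from $t$ to $t+r-\bar r$. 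Two steps deserve more care than your sketch gives them. First, in the ``$\supseteq$'' direction, before Cauchy--Schwarz can be applied you need the rearrangement step: for non-increasing $x\ge 0$ the quantity $\sum_j\lambda_j(T)\,x_{\bar r-t+j}$ is maximized, subject to $0\le\lambda_j\le 1$ and $\sum_j\lambda_j=t+r-\bar r$, by loading the first $t-1$ coordinates with weight $1$ and putting weight $r-\lfloor r\rfloor$ on $x_{\bar r}$; only after this does the bound reduce to the weighted Cauchy--Schwarz inequality $\sum_i w_i\sigma_i x_i\le\|A\|_r\,\|x\|_{r}$ with weights $w_i=1$ for $i\le\lfloor r\rfloor$ and $w_{\bar r}=r-\lfloor r\rfloor$. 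Second, in the ``$\subseteq$'' direction the positive semidefiniteness of $T$ is the nontrivial content: von Neumann's equality condition yields a \emph{common} SVD of $M$ and $A$, so within the tied block the shared singular vectors are $U_sQ$ and $V_sQ$ for a single orthogonal $Q$, whence the block of $M$ is $U_s(Q\,\diag(\tau)\,Q^{\transp})V_s^{\transp}$ with $Q\,\diag(\tau)\,Q^{\transp}\succeq 0$; stating only that $M$ is ``block-diagonal with symmetric psd blocks'' skips the reason the left and right rotations coincide. With those two points filled in, your argument is complete and matches the identity $\partial\|A\|_{r}=\tfrac{1}{\|A\|_r}\conv(\svd_r(A))$ that the paper records after the proposition.
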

It is readily seen that \cref{prop:sub_diff_rnorm} is equivalent to $$\partial \normrast{A} = \frac{1}{\normr{A}} \conv(\svd_r(A)).$$
	\subsection{Proof of \texorpdfstring{\cref{lem:norm}}{Lemma~\ref{lem:norm}}}
	\label{subsec:proofs}
	\begin{proof}
		Let $1\leq r \leq n$, $M \in \Rmn$ and the function $g:\mathbb{R}^n \to \mathbb{R}_{\geq 0}$ be defined by
		$$g(x_1,\dots,x_n) := \|\diag(x_1,\dots,x_n)\|_r.$$ 
		The unitary invariance of $\| \cdot \|_r$ follows by \cite[Theorem~7.4.7.2.]{horn2012matrix}, because $g$ is a symmetric gauge function. 
By~\cite[Corollary~7.4.1.3.]{horn2012matrix} it holds that
\begin{align*}
\sup_{\stackrel{\|X\|_F = 1}{\rk(X) \leq r}} \langle X,M\rangle =  \sup_{\sum_{i=1}^{r} \sigma_i^2(X) = 1} \sum_{i=1}^{r} \sigma_i^2(X)\sigma_i(M) = \|M\|_r.
\end{align*}
Then the $r\ast$-norm inherits the unitary invariance of the $r$-norm and with $\Sigma := \diag(\sigma_1(M),\dots,\sigma_{n}(M))$ it follows that
		\begin{align*}
		\| M \|_{r\ast}\;  = \; \|\Sigma \|_{r\ast} \;= \; \max_{\|X \|_r \leq 1} \langle \Sigma,X \rangle \; = \; \max_{\sum_{i=1}^r\sigma_i^2(X)=1} \sum_{i=1}^{n} \sigma_i(M) \sigma_i(X) \;
		= \; \max_{\sum_{i=1}^r\sigma_i^2(X) \leq 1} \left[ \sum_{i=1}^{r} \sigma_i(M) \sigma_i(X) + \sigma_r(X) \sum_{i=r+1}^{n} \sigma_i(M) \right].
		\end{align*} 
		The third equality follows by \cite[Corollary~7.4.1.3.]{horn2012matrix}. Hence, 
		\begin{align*} 
	\|M\|_{1 \ast} \; = \; \max_{\sum_{i=1}^1 s_i^2=1} \sum_{i=1}^{n} \sigma_i(M) s_i \geq \; \dots \; \geq \; \max_{\sum_{i=1}^n s_i^2=1} \sum_{i=1}^n \sigma_i(M) s_i = \|M\|_{n\ast} = \|M\|_F.
		\end{align*}
		Moreover, by the definition of the $r$-norm
		\begin{align*}
		\|M\|_F = \|M\|_{n} \geq \dots \geq \|M\|_1
		\end{align*}
		and therefore \cref{eq:norm_ineq} is shown. In particular, 
		\begin{align*} 
		\|M\|_{r \ast} \; = \; \max_{\sum_{i=1}^r s_i^2=1} \sum_{i=1}^{n} \sigma_i(M) s_i \geq \|M\|_F \; \geq \; \max_{\sum_{i=1}^r s_i^2=1} \sum_{i=1}^{r} \sigma_i(M) s_i \; = \; \|M\|_r. \label{eq:proof_rank}
		\end{align*} 
		Obviously, $\|M\|_F = \|M\|_r$ if and only if $\rk(M) \leq r$, and thus $\|M\|_{r \ast} = \|M\|_{r}$ if and only if $\rk(M) \leq r$.
	\end{proof} 
\subsection{Proof of \texorpdfstring{\cref{thm:con_bicon}}{Theorem~\ref{thm:con_bicon}}}
\label{subsec:proof_con_bicon}
\begin{proof} The conjugate function satisfies
	\begin{align*}
	f^\ast(D) &= \sup_{\substack{M \in \Rmn \\ \rk(M) \leq r}}\left[\langle D,M \rangle - \frac{1}{2}\|N-M\|_F^2 \right]\\
	&= \sup_{\substack{M \in \mathbb{R}^{n \times m} \\ \rk(M) \leq r}}-\dfrac{1}{2}\|N-M + D\|^2_F +  \langle D, N \rangle + \dfrac{1}{2}  \|D \|_F^2\\
	&= -\dfrac{1}{2} \|N + D\|^2_F + \dfrac{1}{2} \|N + D\|^2_r  + \langle D, N \rangle + \dfrac{1}{2} \|D \|_F^2\\
	&=  -\frac{1}{2}\|N\|_F^2 + \frac{1}{2}\|N + D\|^2_r,
	\end{align*}
	where the third equality follows by \cref{prop:Schmidt-Mirsky}. 
	Hence,
	\begin{align*}
	f^{\ast \ast}(M) &= \sup_{D \in \Rmn} \left[\langle D,M \rangle +\frac{1}{2}\|N\|_F^2 - \frac{1}{2}\|N + D\|^2_r \right]\\
	&= \sup_{D \in \Rmn} \left[\langle D-N,M \rangle +\frac{1}{2}\|N\|_F^2 - \frac{1}{2}\|D\|^2_r \right]\\
	&= \frac{1}{2}\|N\|_F^2-\langle N,M \rangle+\sup_{D \in \Rmn} \left[\langle D,M \rangle - \frac{1}{2}\|D\|^2_r \right]\\
	&= \frac{1}{2}\|N\|_F^2 - \langle N, M \rangle + \frac{1}{2} \|M\|_{r\ast}^2,
	\end{align*}
	where the last equality follows by \cite[Corollary~15.3.1]{rockafellar1970convex} with
	\begin{align*}
	\frac{1}{2} \| \cdot \|_{r\ast}^2 = \left( \frac{1}{2} \|\cdot\|_{r}^2 \right)^\ast. 
	\end{align*} 
\end{proof}

\subsection{Proof of \texorpdfstring{\cref{prop:DM_relation}}{Theorem~\ref{prop:DM_relation}}}
\label{subsec:proof_DM_relation}
\begin{proof}
	If $D^\opts$ and $M^\opts$ are solutions to \cref{eq:dual_problem} and \cref{eq:dualdaul}, respectively, then by \cite[Theorem~31.1]{rockafellar1970convex} it holds that $$f^{\ast \ast }(M^\opts) = \langle D^\opts, M^\opts \rangle - f^\ast(D^\opts),$$ where 
	$f^\ast$ and $f^{\ast \ast}$ are given by \cref{eq:fconj} and \cref{eq:fbiconj}. Hence, by \cite[Theorem~23.5.]{rockafellar1970convex} it follows that 
	\begin{align*}
	M^\opts \in \left. \partial_D \frac{1}{2} \|N+D\|_r^2 \right|_{D = D^\opts} = \left. {\|N+D^\opts\|_r}   \partial_D \|N+D\|_r \right|_{D = D^\opts}
	\end{align*}
	and invoking \cref{prop:sub_diff_rnorm} proves the result.
\end{proof}

\section*{Acknowledgment}
The authors would like to thank Andrey Ghulchak for his useful comments and numerous counter-examples. All authors are members of the LCCC Linnaeus Center and the eLLIIT Excellence Center at Lund University. The first author is financially supported by the Swedish Research Council through the project 621-2012-5357. The first and third authors are financially supported by the Swedish Foundation for Strategic Research.

\ifCLASSOPTIONcaptionsoff
  \newpage
\fi



\bibliographystyle{IEEEtran}
\bibliography{refopt,refpos}
\end{document}